\documentclass{article}

\usepackage[english]{babel} 
\usepackage{afterpage}
\usepackage{algorithm}
\usepackage{algorithmicx}
 \usepackage{algpseudocode}
 \usepackage{authblk}
\usepackage{amsmath}
\usepackage{amsthm}
\usepackage{mathtools}
\usepackage[title]{appendix}
\usepackage{amsfonts}
\usepackage{amssymb}
\usepackage{booktabs} 
\usepackage{chngcntr}
\usepackage{comment} 
\usepackage{csquotes} 
\usepackage{csvsimple} 
\usepackage{diagbox}
\usepackage{enumitem}
\usepackage{float} 
\usepackage[T1]{fontenc} 
\usepackage{geometry}
\usepackage{graphicx}
\usepackage[colorlinks=true, allcolors=blue]{hyperref}
\usepackage[nopatch=footnote]{microtype}
\usepackage{placeins}
\usepackage{needspace}
\usepackage{listings}
\usepackage{lipsum}
\usepackage{longtable}
\usepackage{mathrsfs}
\usepackage{multirow}
\usepackage{orcidlink}
\usepackage{pdflscape}
\usepackage[]{ragged2e}

\usepackage{setspace} 
\usepackage{threeparttable} 
\usepackage{tabularray}
\usepackage{cite}
\usepackage{subcaption}

\hypersetup{hypertexnames=false} 

\theoremstyle{plain}         
\newtheorem{remark}{Remark}[section]   
\theoremstyle{plain}         
\newtheorem{example}{Example}[section]   
\theoremstyle{plain}          
\newtheorem{theorem}{Theorem}[section] 
\newtheorem{lemma}{Lemma}[section] 
\newtheorem{assumption}{Assumption}[section] 
\newtheorem{definition}{Definition}[section] 
\newtheorem{proposition}{Proposition}[section] 

\graphicspath{{figures/}} 

\title{Gradient-Enhanced Proximal Algorithms for Mean Field Planning on Surfaces}
\author[1]{Chengrun Jiang}
\affil[1]{Department of Mathematical Sciences, Tsinghua University, Beijing, 100084,  China (\href{mailto:jcr22@mails.tsinghua.edu.cn}{jcr22@mails.tsinghua.edu.cn})}
\date{July 26, 2025}

\begin{document}
\maketitle

\begin{abstract}
Mean field planning on a surface prescribes initial and terminal densities and minimizes a transport energy subject to the continuity equation. Proximal algorithms for this problem repeatedly solve a time--space Poisson equation, whose temporal derivative and surface gradient determine the density and momentum corrections. We study a gradient-enhanced approximation of this constraint projection using finite differences in time, surface finite elements in space, temporal polynomial preserving recovery, and spatial parametric polynomial preserving recovery. The same construction is incorporated into ISTA, FISTA, and Douglas--Rachford splitting. We distinguish the recovered update from an exact discrete projection and derive residual identities and conditional finite-iteration perturbation bounds that retain data, boundary, and linear-solver errors. Existing derivative-recovery estimates identify a higher-order contribution under suitable regularity and mesh assumptions; they do not by themselves establish convergence of the outer optimization iteration. Available numerical illustrations on the sphere and a more complicated algebraic surface are discussed together with the limits of the recorded refinement data.
\end{abstract}
\noindent\textbf{Keywords:} mean field planning; surface optimal transport; proximal splitting; gradient recovery; surface finite elements.

\section{Introduction}

\begingroup
\setlength{\emergencystretch}{2em}

Mean field games (MFGs) describe strategic interactions among large populations of agents whose individual decisions depend on the population distribution. Foundational developments by Lasry and Lions~\cite{lasry2007mean} and Huang, Malham\'e, and Caines~\cite{huang2006large} connect individual optimal control with macroscopic density evolution. Mean field planning (MFP) concerns the related problem of connecting prescribed initial and terminal population distributions; its analysis includes the diffusive planning problem studied by Porretta~\cite{porretta2014planning}. Here we focus on first-order models without diffusion and with a potential structure: the coupling derives from a functional of the density, and the planning problem admits a variational formulation. Convex duality also underlies the weak-solution theory for first-order MFGs with local couplings developed by Cardaliaguet and Graber~\cite{cardaliaguet2015firstorder}.

The connection with optimal transport (OT) is transparent in density--momentum variables. The Benamou--Brenier formulation~\cite{benamou2000computational} minimizes kinetic action subject to the continuity equation and prescribed endpoint densities. Adding an interaction functional leads to variational MFP; Orrieri, Porretta, and Savar\'e~\cite{orrieri2019variational} developed a rigorous first-order planning theory using convex duality and dynamic transport. We consider the manifold formulation~\eqref{equ:MFP_main_form}, where $\rho$ is the density, $\mathbf{m}=\rho\mathbf{v}$ is the tangent momentum, and $\mathcal{F}$ describes the interaction. Setting $\mathcal{F}=0$ and $L(\rho,\mathbf{m})=|\mathbf{m}|^2/(2\rho)$ for $\rho>0$, with its lower semicontinuous extension at vacuum, recovers dynamic OT. Potential MFGs instead admit a terminal-cost formulation such as~\eqref{equ:MFG_main_form}.

Several numerical approaches exploit these structures. Achdou and Capuzzo-Dolcetta~\cite{achdou2010numerical} introduced finite difference methods for MFGs; subsequent work treated the planning problem~\cite{achdou2012planning} and established convergence of finite difference approximations~\cite{achdou2013convergence}. Benamou and Carlier~\cite{benamou2015augmented} developed augmented Lagrangian methods for transport optimization and MFGs. Proximal methods have also been applied to stationary MFGs with local couplings~\cite{briceno2018proximal}, and a primal--dual implementation for time-dependent diffusive MFGs combines proximity operators with preconditioned linear solves~\cite{briceno2019implementation}. These methods illustrate how conservation laws and variational structure guide solver design.

On curved surfaces, admissible motion and transport costs depend on intrinsic geometry, as reflected in McCann's theory of optimal maps on Riemannian manifolds~\cite{mccann2001polar}. Computation must account for tangent vector fields, surface differential operators, and geometric approximation. Lavenant et al.~\cite{lavenant2018dynamical} developed dynamical OT on discrete surfaces, while Lavenant~\cite{lavenant2021convergence} established convergence under mesh refinement for quadratic dynamical OT discretizations satisfying a general framework, including schemes on triangulated surfaces. Yu et al.~\cite{yu2023mfg} formulated MFGs on Riemannian manifolds and designed a proximal gradient method for variational problems on triangular meshes.

Surface finite elements provide a natural spatial discretization. Dziuk's method for the Beltrami operator~\cite{dziuk1988beltrami} and the survey by Dziuk and Elliott~\cite{dziuk2013surface} give the underlying framework. Higher-order analysis~\cite{demlow2009higherorder} and adaptive methods~\cite{demlow2007adaptive} distinguish errors in approximating the geometry from errors in approximating the surface PDE. For transport solvers, a further concern is the accuracy of differential quantities passed from the surface PDE solver to the optimization updates.

Proximal algorithms build on proximity operators~\cite{moreau1965proximite}, monotone operator splitting~\cite{lions1979splitting}, and primal--dual formulations~\cite{chambolle2011primaldual}. Writing the objective as the energy $\mathcal{Y}$ plus the indicator $\chi_{\mathcal{C}}$ of the continuity-constraint set separates the energy from conservation. Yu et al.~\cite{yu2024fast} developed a fast proximal gradient method for dynamic MFP whose projection step reduces to an elliptic problem. Papadakis, Peyr\'e, and Oudet~\cite{papadakis2014proximal} applied Douglas--Rachford and primal--dual splitting to dynamic OT. Proximal gradient iterations treat the energy explicitly, whereas Douglas--Rachford splitting evaluates its proximal map. Both require the proximal map of $\chi_{\mathcal{C}}$, namely the projection onto the conservation constraint.

This shared projection is central to the present work. It requires a Poisson equation on the time--space manifold, followed by corrections to $\rho$ and $\mathbf{m}$ using the potential's temporal derivative and surface gradient. Accuracy of the potential alone therefore does not determine accuracy of the updated variables. For smooth solutions on suitable meshes, standard $L^2$ error estimates for linear surface finite elements give one less power of $h$ for the raw gradient than for the potential~\cite{demlow2009higherorder}. These elementwise gradients also differ from the nodal representation of density and momentum.

Gradient recovery addresses this issue while retaining piecewise linear finite elements. The superconvergent patch recovery of Zienkiewicz and Zhu~\cite{zienkiewicz1992recovery} established a widely used approach to derivative reconstruction. The polynomial preserving recovery (PPR) method of Zhang and Naga~\cite{zhang2005recovery} reconstructs derivatives by local polynomial fitting and has superconvergence properties under suitable assumptions. The recovered gradients can also yield asymptotically exact a posteriori error estimators on mildly structured meshes~\cite{naga2004posteriori}. On surfaces, Wei, Chen, and Huang~\cite{wei2010recovery} analyzed superconvergence and gradient recovery for linear finite elements. Dong and Guo's parametric polynomial preserving recovery (PPPR)~\cite{dong2020parametric} reconstructs surface gradients from local approximations of both geometry and function values. These developments motivate nodal derivative recovery within the constraint projection.

Gradient recovery in surface transport has already been demonstrated by the gradient enhanced ADMM method of Dong et al.~\cite{dong2024admm}. Jiang et al.~\cite{jiang2026fdm} further developed an FDM--sFEM scheme for time--space elliptic problems with superconvergence results for recovered gradients. Building on these works, we investigate gradient enhancement at the level of the constraint proximal map. Finite differences in time and surface finite elements in space are combined with temporal PPR and spatial PPPR. The recovered operators enter the right-hand side of the projection Poisson equation and the subsequent density and momentum updates, providing a common construction for different proximal iterations.

We apply this construction to ISTA, FISTA, and Douglas--Rachford splitting. FISTA uses Beck and Teboulle's acceleration~\cite{beck2009fast}, together with backtracking, adaptive restarting, and monotone safeguards~\cite{donoghue2015adaptive,aujol2024parameter,zibetti2018monotone}. The distinction between explicit energy gradients and implicit proximal maps matters near vacuum, where the kinetic energy is singular. The error analysis distinguishes the recovered map from an exact projection and examines how derivative-recovery, data, boundary, and solver errors enter individual updates and finite sequences of iterations. Such discretization estimates are distinct from convergence results for the outer optimization iteration. Experiments examine stepsize dependence, interaction costs on the sphere, transport on the Enzensberger--Stern surface, and refinement behavior.

\begin{samepage}
The main contributions can be summarized as follows:
\begin{enumerate}[before={\interlinepenalty=10000}]
    \item We construct a gradient enhanced approximation of the continuity-constraint proximal map on triangulated surfaces by combining an FDM--sFEM Poisson solver with temporal PPR and spatial PPPR.
    \item We incorporate this shared construction into proximal gradient and Douglas--Rachford iterations, including an accelerated FISTA variant with practical stepsize and monotonicity safeguards.
    \item We derive residual identities and conditional finite-iteration error bounds, and discuss numerical illustrations and recorded refinement data for surface OT and MFP.
\end{enumerate}
\end{samepage}

The remainder of the paper is organized as follows. Section~2 introduces the variational models and notation. Section~3 presents the proximal algorithms. Section~4 develops their discretization and gradient enhancement. Section~5 gives the error analysis, Section~6 discusses the numerical illustrations and their verification limits, and Section~7 concludes the paper.

\endgroup

\section{Preliminaries}
Let $\mathcal{T}=(0,1)$ be the time interval. Let $(\mathcal{M},g)$ be a compact, connected, oriented, $C^3$-smooth two-dimensional surface without boundary, embedded in $\mathbb R^3$ and equipped with the induced metric. Stronger regularity will be stated when required for derivative recovery. We denote by $T\mathcal{M}=\{(\mathbf{x},\mathbf{p})| \mathbf{p}\in T_{\mathbf{x}}\mathcal{M}\}$ the tangent bundle on $\mathcal{M}$.

We consider the dynamic MFP problem with the following formulation:
\begin{equation}
    \begin{aligned}
        &\min_{\rho,\mathbf{m}}&&\mathcal{Y}(\rho,\mathbf{m})\coloneqq\int_{0}^{1}\int_{\mathcal{M}}L\left(\rho(t,x),\mathbf{m}(t,x)\right)\;\mathrm{d}\sigma_{g}\;\mathrm{d}t+\int_{0}^{1}\mathcal{F}\left(\rho (t,\cdot)\right)\mathrm{d}t\\
        &\text{s.t.} && \partial_{t}\rho(t,x) +\nabla_{g}\cdot \mathbf{m}(t,x)=0,\quad\text{ on }\mathcal{T}\times\mathcal{M},\\
        & &&\rho(0,\cdot)=\rho_0,\ \rho(1,\cdot)=\rho_1,\quad\text{ on }\mathcal{M},
    \end{aligned}
    \label{equ:MFP_main_form}
\end{equation}
where $\rho(t,x)$ denotes the population density, $\mathbf{m}=\rho \mathbf{v}$ denotes the momentum with the velocity field $\mathbf{v}(t,\mathbf{x})$ describing the evolution of $\rho$, $L(\rho,\mathbf{m})$ denotes the dynamic cost function and $\mathcal{F}(\rho)$ denotes the interaction cost function. 

\begin{remark}
    The continuity and endpoint constraints in \eqref{equ:MFP_main_form} are affine. We assume that the energy is proper, lower semicontinuous, and convex in the density--momentum pair. Differentiability is required only for the explicit energy steps and on the region where those steps are evaluated. For a local interaction we write $\mathcal F(\rho)=\int_{\mathcal M}F_E(x,\rho(x))\,\mathrm d\sigma_g$. Nonnegativity is imposed through the domain of the energy, rather than included in the affine projection constraint.
\end{remark}

\begin{remark}
    A probability density satisfies $\rho\ge0$, $\rho\in L^\infty(\mathcal T;L^1(\mathcal M))$, and $\int_{\mathcal M}\rho(t,x)\,\mathrm d\sigma_g=1$ for almost every $t\in\mathcal T$.
\end{remark}

\begin{example}
    If we choose the cost functions in \eqref{equ:MFP_main_form} by $\mathcal{F}=0$ and 
    \begin{equation}
        L(\rho,\mathbf{m})= \left\{\begin{aligned}
            &\frac{\left|\mathbf{m}\right|^2}{2\rho}, &&\rho>0,\\
            &0, &&\rho=0,\mathbf{m}=\mathbf{0},\\
            &+\infty, &&\text{otherwise}.
        \end{aligned}\right.
        \nonumber
    \end{equation}

    Then the MFP problem \eqref{equ:MFP_main_form} becomes the dynamical formulation of optimal transport (OT) problem.
\end{example}

Replacing the fixed terminal-density constraint in \eqref{equ:MFP_main_form} by a convex terminal cost $\mathcal G$ gives a variational formulation of potential mean field games (MFGs):
\begin{equation}
    \begin{aligned}
        &\min_{\rho,\mathbf{m}}&&\int_{0}^{1}\int_{\mathcal{M}}L\left(\rho(t,x),\mathbf{m}(t,x)\right)\;\mathrm{d}\sigma_{g}\;\mathrm{d}t+\int_{0}^{1}\mathcal{F}\left(\rho (t,\cdot)\right)\mathrm{d}t+\mathcal{G}\left(\rho(1,\cdot) \right)\\
        &\text{s.t.} && \partial_{t}\rho(t,x) +\nabla_{g}\cdot \mathbf{m}(t,x)=0,&\text{ on }\mathcal{T}\times\mathcal{M},\\
        & &&\rho(0,\cdot)=\rho_0,\ & \text{ on }\mathcal{M},
    \end{aligned}
    \label{equ:MFG_main_form}
\end{equation}
A finite terminal cost generally does not enforce a specified terminal density. The MFG formulation provides context here; the algorithms and numerical illustrations below concern MFP with two prescribed endpoints and its OT special case.

\begin{example}
    An example of the MFG problem \eqref{equ:MFG_main_form} is provided here:
    \begin{equation}
        \begin{aligned}
        &\min_{\rho,\mathbf{m}}&&\int_{0}^{1}\int_{\mathcal{M}}L\left(\rho,\mathbf{m}\right)\mathrm{d}\sigma_{g}\;\mathrm{d}t+\lambda_{E}\int_{0}^{1}\int_{\mathcal{M}}\rho\log \rho\;\mathrm{d}\sigma_g\mathrm{d}t+\lambda_Q\int_{0}^{1}\int_{\mathcal{M}}\rho(t,x)Q(x)\mathrm{d}\sigma_g \mathrm{d}t\\
        & &&+\lambda_G\int_{\mathcal{M}}\rho(1,x) G(x)\mathrm{d}\sigma_g\\
        &\text{s.t.} && \partial_{t}\rho(t,x) +\nabla_{g}\cdot \mathbf{m}(t,x)=0,\quad\text{ on }\mathcal{T}\times\mathcal{M},\\
        & &&\rho(0,\cdot)=\rho_0,\quad \text{ on }\mathcal{M},
    \end{aligned}
    \end{equation}
\end{example}

The endpoint densities are nonnegative and satisfy
\[
\int_{\mathcal M}\rho_0\,\mathrm d\sigma_g
=\int_{\mathcal M}\rho_1\,\mathrm d\sigma_g=1.
\]
For the Hilbert-space projection below we take $\rho_0,\rho_1\in L^2(\mathcal M)$ and set $\mathcal Q=\mathcal T\times\mathcal M$ and
\[
\mathcal H=L^2(\mathcal Q)\times L^2(\mathcal Q;T\mathcal M),\qquad
\|(\rho,\mathbf m)\|_{\mathcal H}^2
=\|\rho\|_{L^2(\mathcal Q)}^2+\|\mathbf m\|_{L^2(\mathcal Q)}^2.
\]
The vector norm is induced by $g$. We occasionally abbreviate a density--momentum pair by $\mathbf z=(\rho,\mathbf m)$, without changing the component notation used in the algorithms.

\section{Algorithm}

For dynamical mean field planning on a surface, we recall the constrained form
\begin{equation}
    \begin{aligned}
    &\min_{\rho,\mathbf{m}}&& \mathcal{Y}(\rho,\mathbf{m})\\
        &\text{s.t.} && \partial_{t}\rho(t,x) +\nabla_{g}\cdot \mathbf{m}(t,x)=0,&\text{ on }\mathcal{T}\times\mathcal{M},\\
        & &&\rho(0,\cdot)=\rho_0,\ \rho(1,\cdot)=\rho_1,& \text{ on }\mathcal{M},
    \end{aligned}
    \label{equ:OT_main_form}
\end{equation}

We use the weak form of the constraint set, which also gives a meaning to the endpoint conditions for $L^2$ fields:
\begin{equation}\label{eq:weak_constraint}
\begin{split}
\mathcal C=\bigl\{(\rho,\mathbf m)\in\mathcal H:
&\ (\rho,\partial_t\psi)_{\mathcal Q}+(\mathbf m,\nabla_g\psi)_{\mathcal Q}
=B(\psi)\quad\forall\psi\in H^1(\mathcal Q)\bigr\},\\
B(\psi)&=(\rho_1,\psi(1))_{\mathcal M}-(\rho_0,\psi(0))_{\mathcal M}.
\end{split}
\end{equation}
For smooth fields this is precisely the continuity equation with the prescribed endpoint densities. The set is closed and affine in $\mathcal H$; the projection construction below also proves that it is nonempty.

Let $v(\rho,\mathbf{m})=\chi_{\mathcal{C}}(\rho,\mathbf{m})$ be the indicator function of $\mathcal{C}$, then the MFP problem \eqref{equ:OT_main_form} can be transformed into the following form
\begin{equation}
    \min_{\rho,\mathbf{m}}\mathcal{Y}(\rho,\mathbf{m})+\chi_{\mathcal{C}}(\rho,\mathbf{m}).
    \label{eq:OT_split_form}
\end{equation}

To deal with $\chi_{\mathcal{C}}$, here we introduce the proximal operator.
\begin{definition}
For a proper, lower semicontinuous, convex function $v(\mathbf{z})$, the proximal operator of $v$ with parameter $\eta>0$ is defined by  
    $$
\operatorname{Prox}_{\eta v}(\mathbf{z}):=\underset{\mathbf{y}}{\operatorname{argmin}}\left\{v(\mathbf{y})+\frac{1}{2 \eta}\|\mathbf{y}-\mathbf{z}\|_2^2\right\} .
$$
\label{def:proximal_operator}
\end{definition}

\subsection{Computation of the constraint proximal map}
To handle $\chi_{\mathcal C}$, we compute its proximal operator, which is the orthogonal projection onto $\mathcal C$:
\begin{equation}\label{eq:projection_step_optimization}
P_{\mathcal C}\mathbf z
=\operatorname{Prox}_{\chi_{\mathcal C}}\mathbf z
=\underset{\mathbf z^\star\in\mathcal C}{\operatorname{argmin}}
\frac12\|\mathbf z^\star-\mathbf z\|_{\mathcal H}^2.
\end{equation}
Its value is independent of the positive proximal parameter. Write $\mathbb G\psi=(\partial_t\psi,\nabla_g\psi)$ and $V=\{\psi\in H^1(\mathcal Q):\int_{\mathcal Q}\psi=0\}$. The variational equation for the potential is
\begin{equation}\label{eq:projection_weak_poisson}
(\mathbb G\varphi,\mathbb G\psi)_{\mathcal H}
=B(\psi)-(\mathbf z,\mathbb G\psi)_{\mathcal H},\qquad \psi\in V.
\end{equation}
\begin{proposition}\label{prop:continuous_projection}
For every $\mathbf z\in\mathcal H$, equation~\eqref{eq:projection_weak_poisson} has a unique solution in $V$, and
\begin{equation}\label{eq:projection_correction}
P_{\mathcal C}(\rho,\mathbf m)
=(\rho+\partial_t\varphi,\mathbf m+\nabla_g\varphi).
\end{equation}
The map $P_{\mathcal C}$ is firmly nonexpansive in $\mathcal H$.
\end{proposition}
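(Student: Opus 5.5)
The argument has three steps: well-posedness of \eqref{eq:projection_weak_poisson} by Lax--Milgram, the projection identity \eqref{eq:projection_correction} from the variational characterization of projections onto closed affine sets, and firm nonexpansiveness as a standard property of such projections.

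\textbf{Step 1: well-posedness.} The bilinear form $a(\varphi,\psi)=(\mathbb G\varphi,\mathbb G\psi)_{\mathcal H}=\|\partial_t\varphi\|^2+\|\nabla_g\varphi\|^2$ (on the diagonal) is bounded on $H^1(\mathcal Q)$. Coercivity on $V$ requires a Poincar\'e--Wirtinger inequality on $\mathcal Q=(0,1)\times\mathcal M$. I expect this to be the main obstacle, though it is routine. I would prove it by contradiction via Rellich compactness of $H^1(\mathcal Q)\hookrightarrow L^2(\mathcal Q)$, which holds because $\mathcal Q$ is a compact Lipschitz manifold with boundary. A limit with vanishing $\mathbb G$ is constant, since $\mathcal M$ is connected and $(0,1)$ is an interval, and it has zero mean, hence it vanishes. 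For the right-hand side, $B$ is continuous on $H^1(\mathcal Q)$ by the trace theorem $\psi\mapsto\psi(0),\psi(1)\in L^2(\mathcal M)$, using $\rho_0,\rho_1\in L^2$; and $\psi\mapsto(\mathbf z,\mathbb G\psi)$ is clearly bounded. Lax--Milgram then gives a unique $\varphi\in V$.

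\textbf{Step 2: extension to all test functions.} Next I would check that the equation holds for every $\psi\in H^1(\mathcal Q)$, not just $\psi\in V$. Since $H^1=V\oplus\mathbb R$, it suffices to test with constants. For $\psi\equiv1$ the left side vanishes, and $B(1)=\int\rho_1-\int\rho_0=0$ by mass equality. Moreover $(\mathbf z,\mathbb G 1)=0$.

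\textbf{Step 3: the projection identity.} Set $\mathbf z^\star=\mathbf z+\mathbb G\varphi$. Step 2 gives $(\mathbf z^\star,\mathbb G\psi)=B(\psi)$ for all $\psi\in H^1(\mathcal Q)$, so $\mathbf z^\star\in\mathcal C$ and in particular $\mathcal C\neq\emptyset$. The set $\mathcal C$ is closed and affine, with direction space $\mathcal C_0=\{\mathbf w:(\mathbf w,\mathbb G\psi)=0\ \forall\psi\}=(\mathbb G H^1)^\perp$. For any $\mathbf y\in\mathcal C$ we have $\mathbf y-\mathbf z^\star\in\mathcal C_0$, and $\mathbf z-\mathbf z^\star=-\mathbb G\varphi$, so $(\mathbf z-\mathbf z^\star,\mathbf y-\mathbf z^\star)=0$. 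Pythagoras then gives
\[
\|\mathbf y-\mathbf z\|^2=\|\mathbf y-\mathbf z^\star\|^2+\|\mathbf z^\star-\mathbf z\|^2,
\]
so $\mathbf z^\star$ is the unique minimizer in \eqref{eq:projection_step_optimization}. This yields \eqref{eq:projection_correction}. Independence from the proximal parameter is immediate, since $\eta\chi_{\mathcal C}=\chi_{\mathcal C}$.

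\textbf{Step 4: firm nonexpansiveness.} Since $\mathbb G\varphi$ depends linearly on $\mathbf z$ up to the fixed $B$-contribution, $P_{\mathcal C}\mathbf z=\mathbf c+P_0\mathbf z$, where $P_0$ is the orthogonal projection onto $\mathcal C_0$. Hence
\[
P_{\mathcal C}\mathbf z_1-P_{\mathcal C}\mathbf z_2=P_0(\mathbf z_1-\mathbf z_2),
\]
and because $P_0$ is a self-adjoint idempotent,
\[
\|P_0\mathbf w\|^2=(P_0\mathbf w,\mathbf w).
\]
This is exactly firm nonexpansiveness. Alternatively, one can invoke the general fact that projections onto nonempty closed convex sets are firmly nonexpansive.
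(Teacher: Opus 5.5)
Your proposal is correct and follows essentially the same route as the paper: Lax--Milgram on $V$ via Poincar\'e's inequality and the trace bound for $B$, then extension of the variational identity to constants using $B(1)=0$. It continues with the orthogonality $(\mathbf w-\mathbf z^\star,\mathbb G\varphi)_{\mathcal H}=0$ for $\mathbf w\in\mathcal C$ to obtain the projection property, and the identity $\|P_{\mathcal C}\mathbf z_1-P_{\mathcal C}\mathbf z_2\|_{\mathcal H}^2=(P_{\mathcal C}\mathbf z_1-P_{\mathcal C}\mathbf z_2,\mathbf z_1-\mathbf z_2)_{\mathcal H}$ for firm nonexpansiveness, where your decomposition $P_{\mathcal C}=\mathbf c+P_0$ only repackages the paper's step of applying that orthogonality to two inputs (that $P_0$ is the orthogonal projection onto $\mathcal C_0$ is your Step~3 with $B=0$) and your Rellich argument supplies the Poincar\'e inequality the paper cites.
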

\begin{proof}
The trace theorem bounds $B$ on $H^1(\mathcal Q)$. Poincar\'e's inequality makes $\|\mathbb G\psi\|_{\mathcal H}$ an equivalent norm on $V$, so Lax--Milgram gives the potential. Equal endpoint masses imply $B(1)=0$, hence the variational identity also holds for every $\psi\in H^1(\mathcal Q)$. Consequently $\mathbf z^\star=\mathbf z+\mathbb G\varphi$ satisfies~\eqref{eq:weak_constraint}. For any $\mathbf w\in\mathcal C$, subtracting the constraints gives $(\mathbf w-\mathbf z^\star,\mathbb G\varphi)_{\mathcal H}=0$. Expanding the squared distance proves the projection property. Applying this orthogonality to two inputs yields
\[
\|P_{\mathcal C}\mathbf z-P_{\mathcal C}\mathbf w\|_{\mathcal H}^2
=(P_{\mathcal C}\mathbf z-P_{\mathcal C}\mathbf w,\mathbf z-\mathbf w)_{\mathcal H},
\]
which proves firm nonexpansiveness.
\end{proof}

For smooth input and solution, the Lagrangian calculation gives the same update. With the optimization variables written explicitly, the Lagrangian is
\begin{equation}\label{eq:projection_step_Lagrangian}
\mathcal L(\rho^\star,\mathbf m^\star,\varphi)
=\tfrac12\|\rho^\star-\rho\|_{L^2(\mathcal Q)}^2
+\tfrac12\|\mathbf m^\star-\mathbf m\|_{L^2(\mathcal Q)}^2
+(\varphi,\partial_t\rho^\star+\operatorname{div}_g\mathbf m^\star)_{\mathcal Q}.
\end{equation}
Integration by parts yields $\rho^\star=\rho+\partial_t\varphi$ and $\mathbf m^\star=\mathbf m+\nabla_g\varphi$. In particular, the strong Poisson problem is
\begin{equation}\label{eq:Poisson-Equation}
\left\{\begin{aligned}
-\partial_{tt}\varphi-\Delta_g\varphi&=f:=\partial_t\rho+\operatorname{div}_g\mathbf m&&\text{in }\mathcal Q,\\
\partial_t\varphi(0)&=a_0:=\rho_0-\rho(0),\\
\partial_t\varphi(1)&=a_1:=\rho_1-\rho(1),\qquad \int_{\mathcal Q}\varphi=0.
\end{aligned}\right.
\end{equation}
Its compatibility condition is $\int_{\mathcal Q} f=\int_{\mathcal M}(a_0-a_1)$. This follows from equal endpoint masses and the surface divergence theorem. Homogeneous temporal Neumann conditions apply only if the input already has the prescribed endpoints; a general splitting variable need not have them. Equations~\eqref{eq:Poisson-Equation} and~\eqref{eq:projection_correction} constitute the projection computation.

\subsection{\texorpdfstring{Descent of $\mathcal{Y}(\rho,\mathbf{m})$}{Descent of the energy}}
The descent of $\mathcal{Y}(\rho,\mathbf{m})$ can be carried out in two main ways. One approach is to apply a direct gradient step to $\mathcal{Y}$, leading to algorithms such as ISTA, FISTA\cite{yu2023mfg,yu2024fast}. The other is to use the proximal operator $\operatorname{Prox}_{\mathcal{Y}}(\rho,\mathbf{m})$, which gives rise to the proximal splitting methods, including primal-dual schemes, Douglas-Rachford method, and ALG2/ADMM\cite{papadakis2014proximal,lavenant2018dynamical,dong2024admm}.
\begin{enumerate}
    \item \textbf{Gradient Descent}: (ISTA, FISTA)
    $$\left(\rho^{\left(k+\frac{1}{2}\right)}, \mathbf{m}^{\left(k+\frac{1}{2}\right)}\right)=\left({\rho}^{(k)}-\eta^{(k)} \delta_\rho \mathcal{Y}\left({\rho}^{(k)}, {\mathbf{m}}^{(k)}\right), {\mathbf{m}}^{(k)}-\eta^{(k)} \delta_{\mathbf{m}} \mathcal{Y}\left({\rho}^{(k)}, {\mathbf{m}}^{(k)}\right)\right) .$$
    \item \textbf{Proximal (Splitting)}: (Douglas-Rachford, ALG2/ADMM\ldots)
    Specifically, for dynamical optimal transport, $\mathcal{Y}(\rho,\mathbf{m})=\int_0^1\int_{\mathcal{M}}\frac{\mathbf{m}^2}{2\rho}\mathrm{d}\sigma_g\mathrm{d}t$, the proximal of $\mathcal{Y}$ is
    $$
\operatorname{Prox}_{\eta \mathcal{Y}}({\rho},{\mathbf{m}})=\left\{\begin{array}{cl}
\left(\rho^{\star},\frac{\rho^{\star} {\mathbf{m}}}{\rho^{\star}+\eta}\right) & \text { if } \rho^{\star}>0, \\
(0,0) & \text { otherwise. }
\end{array}\right. 
$$
where $\rho^{\star}$ is the largest real root of the 3rd order polynomial equation 
\begin{equation*}
P(X)=(X-{\rho})(X+\eta)^{2}-\frac{\eta}{2}\|{\mathbf{m}}\|^{2}=0 
\end{equation*}

\end{enumerate}

\subsection{ISTA}
To solve a general nonsmooth convex model
\begin{equation}
    \min _{\mathbf{z}} u(\mathbf{z})+v(\mathbf{z}),
    \label{eq:general_convex_model}
\end{equation}
where $u$ is convex with a Lipschitz-continuous gradient and $v$ is proper, lower semicontinuous, convex, and possibly nonsmooth.

With an initial $\mathbf{z}^{(0)}$, one can apply the ISTA method\cite{beck2009fast}

$$
\mathbf{z}^{(k+1)}:=\operatorname{Prox}_{\eta^{(k)} v}\left(\mathbf{z}^{(k)}-\eta^{(k)} \nabla u\left(\mathbf{z}^{(k)}\right)\right) .
$$

Here $\eta^{(k)}>0$ is the step-size satisfying $\eta^{(k)}\leq 1/L(u)$, where $L(u)$ is the Lipschitz constant of $\nabla u$.

In particular, for the splitting form \eqref{eq:OT_split_form} of mean field planning, with initial $\left(\rho^{(0)},\mathbf{m}^{(0)}\right)$, the iteration step can be written as
\begin{equation}
    \left(\rho^{(k+1)},\mathbf{m}^{(k+1)}\right)=\operatorname{Prox}_{\eta^{(k)} \chi_{\mathcal{C}}}\left({\rho}^{(k)}-\eta^{(k)} \delta_\rho \mathcal{Y}\left({\rho}^{(k)}, {\mathbf{m}}^{(k)}\right), {\mathbf{m}}^{(k)}-\eta^{(k)} \delta_{\mathbf{m}} \mathcal{Y}\left({\rho}^{(k)}, {\mathbf{m}}^{(k)}\right)\right),
\end{equation}
where $\delta_{\rho}\mathcal{Y}$ and $\delta_{\mathbf{m}}\mathcal{Y}$ denote the variational derivatives of $\mathcal{Y}$ with respect to $\rho$ and $\mathbf{m}$, respectively.

\begin{remark}\label{rem:positivity}
For optimal transport, the kinetic energy $|\mathbf m|^2/(2\rho)$ is not Lipschitz smooth as $\rho\to0^+$. On a positive-density region its derivatives are
\[
\delta_\rho\mathcal Y=-\frac{|\mathbf m|^2}{2\rho^2}+\partial_\rho F_E,
\qquad \delta_{\mathbf m}\mathcal Y=\frac{\mathbf m}{\rho}.
\]
A lower bound $\rho\ge\varepsilon_\rho>0$ prevents division by zero, but a Lipschitz bound for the gradient also requires control of $\mathbf m$ and of the interaction Hessian. The explicit methods are considered only where their energy and gradient are finite. A post-projection clipping $\rho\leftarrow\max\{\rho,\varepsilon_\rho\}$ generally changes the total mass and the continuity residual. It is therefore not a constraint-preserving projection, and any use of such a heuristic must be accounted for as an additional perturbation. The safeguarded procedure below rejects inadmissible trial points rather than silently clipping them.
\end{remark}

\subsection{FISTA}

FISTA is essentially an accelerated ISTA algorithm\cite{beck2009fast}. At each iteration, it  constructs an extrapolated variable $\overline{\mathbf{z}}^{(k+1)}$ as a linear combination of the updated result $\mathbf{z}^{(k+1)}$ and $\mathbf{z}^{(k)}$ based on Nesterov acceleration, yielding an $\mathcal O(k^{-2})$ objective-gap bound for the standard convex problem with a Lipschitz gradient and exact proximal steps. This statement does not automatically extend to a recovered projection or to all adaptive variants. With initial $\mathbf{z}^{(0)}=\overline{\mathbf{z}}^{(0)}$, the algorithm for solving \eqref{eq:general_convex_model} is summarized in the following formulation:
$$
\left\{\begin{array}{l}
\mathbf{z}^{(k+1)}=\operatorname{Prox}_{\eta^{(k)} v}\left(\overline{\mathbf{z}}^{(k)}-\eta^{(k)} \nabla u\left(\overline{\mathbf{z}}^{(k)}\right)\right), \\
\tau^{(k+1)}=\frac{1}{2}\left(1+\sqrt{1+4\left(\tau^{(k)}\right)^2}\right) , \\
\overline{\mathbf{z}}^{(k+1)}=\mathbf{z}^{(k+1)}+\frac{\tau^{(k)}-1}{\tau^{(k+1)}}\left(\mathbf{z}^{(k+1)}-\mathbf{z}^{(k)}\right),
\end{array}\right.
$$
where $\tau^{(k)}$ is the acceleration parameter with initial value $\tau^{(0)}=1$.

For the splitting form of mean field planning \eqref{eq:OT_split_form}, with initial $\left(\rho^{(0)},\mathbf{m}^{(0)}\right)=\left(\overline{\rho}^{(0)},\overline{\mathbf{m}}^{(0)}\right)$ , the iteration form can be rewritten as
\begin{equation}
    \left\{\begin{array}{l}
\left(\rho^{(k+1)},\mathbf{m}^{(k+1)}\right)=\operatorname{Prox}_{\eta^{(k)} v}\left(\overline{\rho}^{(k)}-\eta^{(k)} \delta_\rho \mathcal{Y}\left(\overline{\rho}^{(k)}, \overline{\mathbf{m}}^{(k)}\right), \overline{\mathbf{m}}^{(k)}-\eta^{(k)} \delta_{\mathbf{m}} \mathcal{Y}\left(\overline{\rho}^{(k)}, \overline{\mathbf{m}}^{(k)}\right)\right) ,\\
\tau^{(k+1)}=\frac{1}{2}\left(1+\sqrt{1+4\left(\tau^{(k)}\right)^2}\right) , \\
\left(\overline{\rho}^{(k+1)},\overline{\mathbf{m}}^{(k+1)}\right)=\left(\rho^{(k+1)},\mathbf{m}^{(k+1)}\right)+\frac{\tau^{(k)}-1}{\tau^{(k+1)}}\left(\left(\rho^{(k+1)},\mathbf{m}^{(k+1)}\right)-\left(\rho^{(k)},\mathbf{m}^{(k)}\right)\right).
\end{array}\right.
\label{eq:fista-update_MFP}
\end{equation}

FISTA exhibits a strong dependence on both the step size and the acceleration parameter $\tau^{(k)}$. Therefore, inspired by the parameter-free FISTA framework\cite{aujol2024parameter}, we adopt the adaptive backtracking technique and adaptive restarting techniques in FISTA.

\subsubsection{Backtracking and adaptive restarting}
For a given extrapolated point $\overline{\mathbf{z}}^{(k)}$,
the FISTA update \eqref{eq:fista-update_MFP} also requires a step size $\eta^{(k)}$ satisfying 
$\eta^{(k)}\leq 1/L(\mathcal{Y})$, where $L(\mathcal{Y})$ is the Lipschitz constant of $\nabla\mathcal{Y}$. 
Instead of prescribing the Lipschitz constant in advance, the backtracking strategy \cite{beck2009fast} searches for 
an admissible step size $\eta^{(k)}$ such that the following descent inequality holds:
\begin{equation}
    \begin{aligned}
        &\mathcal{Y}\left(\rho^{(k+1)},\mathbf{m}^{(k+1)}\right)\leq 
\mathcal{Y}\left(\overline{\rho}^{(k)},\overline{\mathbf{m}}^{(k)}\right)
+\frac{1}{2\eta^{(k)}}
\left\|\left(\rho^{(k+1)},\mathbf{m}^{(k+1)}\right)-\left(\overline{\rho}^{(k)},\overline{\mathbf{m}}^{(k)}\right)\right\|^2 \\
&+\left\langle \left(\delta_{\rho}\mathcal{Y}\left(\overline{\rho}^{(k)},\overline{\mathbf{m}}^{(k)}\right),\delta_{\mathbf{m}}\mathcal{Y}\left(\overline{\rho}^{(k)},\overline{\mathbf{m}}^{(k)}\right)\right),
\left({\rho}^{(k+1)}-\overline{\rho}^{(k)},{\mathbf{m}}^{(k+1)}-\overline{\mathbf{m}}^{(k)}\right)
\right\rangle.
    \end{aligned}
    \label{eq:backtracking_condition}
\end{equation}

If \eqref{eq:backtracking_condition} is not satisfied, 
 $\eta^{(k)}$ is reduced by
 $\eta^{(k)}\leftarrow \mu \eta^{(k)}$ with $\mu\in(0,1)$,
 and the update \eqref{eq:fista-update_MFP} is recomputed. The finite trial limit and failure handling used for recovered steps are specified in Section~\ref{sec:discretization}.

 In addition, we adopt an adaptive restart strategy for
the FISTA acceleration parameter\cite{donoghue2015adaptive,aujol2024parameter}. The purpose of restarting is to remove the
inertial effect when the extrapolation no longer provides a reliable descent
direction. Once a restart is triggered, both the extrapolated point and the acceleration state are reset: set $\overline{\mathbf z}^{(k)}=\mathbf z^{(k)}$ and $\tau^{(k)}=1$, and recompute the trial step. After acceptance the extrapolation coefficient $(\tau^{(k)}-1)/\tau^{(k+1)}$ is zero.

For the safeguarded variant, given $\left(\rho^{(k+1)},\mathbf{m}^{(k+1)}\right)$ computed by \eqref{eq:backtracking_condition}, we test if it satisfies the following monotonicity condition:
\begin{equation}
    \mathcal{Y}\left(\rho^{(k+1)},\mathbf{m}^{(k+1)}\right)
    \leq
    \mathcal{Y}\left({\rho}^{(k)},{\mathbf{m}}^{(k)}\right)
    +
    \varepsilon_{\mathrm{obj}}
    \max\left\{1,\left|\mathcal{Y}\left({\rho}^{(k)},{\mathbf{m}}^{(k)}\right)\right|\right\},
    \label{eq:monotone_condition}
\end{equation}
where $\varepsilon_{\mathrm{obj}}$ is a small numerical tolerance. If the condition fails, the trial is discarded and a new trial is computed from the current accepted point with the acceleration state reset. A gradient-based restart for a composite problem instead uses
\begin{equation}\label{eq:gradient_restart}
\left\langle\overline{\mathbf z}^{(k)}-\mathbf z^{(k+1)},
\mathbf z^{(k+1)}-\mathbf z^{(k)}\right\rangle>0,
\end{equation}
with the same inner product as the proximal step~\cite{donoghue2015adaptive}. It tests the generalized gradient against the successive-iterate displacement. The inner product of the energy gradient with the trial displacement from the extrapolated point is not this restart test. For a recovered update these rules are safeguards, not a proof of monotonicity of $\mathcal Y+\chi_{\mathcal C}$.

\subsubsection{Monotone FISTA}

Although the FISTA extrapolation improves the convergence rate in many convex
optimization problems, the extrapolated point may lead to an increase of the
objective value. This issue becomes more pronounced for the dynamical optimal transport
energy. To improve the robustness of the accelerated scheme,
we incorporate a monotone safeguard following the idea of monotone FISTA
(MFISTA) \cite{beck2009fast,zibetti2018monotone}.

With initial $\left(\rho^{(0)},\mathbf{m}^{(0)}\right)=\left(\overline{\rho}^{(0)},\overline{\mathbf{m}}^{(0)}\right)$, 
we first compute the proximal gradient candidate
\begin{equation}
    \left(\rho^{(k+1)}_{\text{C}},\mathbf{m}_{\text{C}}^{(k+1)}\right)=\operatorname{Prox}_{\eta^{(k)} v}\left(\overline{\rho}^{(k)}-\eta^{(k)} \delta_\rho \mathcal{Y}\left(\overline{\rho}^{(k)}, \overline{\mathbf{m}}^{(k)}\right), \overline{\mathbf{m}}^{(k)}-\eta^{(k)} \delta_{\mathbf{m}} \mathcal{Y}\left(\overline{\rho}^{(k)}, \overline{\mathbf{m}}^{(k)}\right)\right),
    \label{eq:mfista_candidate}
\end{equation}

 Different from the standard FISTA update,
MFISTA does not necessarily accept $(\rho_{\mathrm C}^{(k+1)},\mathbf m_{\mathrm C}^{(k+1)})$ as the next
iterate. Instead, it compares the objective value of the newly computed proximal
point with that of the previously accepted iterate and defines
\begin{equation}
    \left(\rho^{(k+1)},\mathbf{m}^{(k+1)}\right)
    =
    \underset{(\rho,\mathbf{m})\in
    \left\{
    \left(\rho^{(k+1)}_{\text{C}},\mathbf{m}_{\text{C}}^{(k+1)}\right),\left({\rho}^{(k)}, {\mathbf{m}}^{(k)}\right)
    \right\}}{\operatorname{argmin}} 
    \mathcal{Y}(\rho,\mathbf{m}),
    \label{eq:mfista_monotone_selection}
\end{equation}
where the monotonicity condition \eqref{eq:monotone_condition} can be used as
a criterion for comparing the objective values at
$\left(\rho_{\mathrm{C}}^{(k+1)},\mathbf{m}_{\mathrm{C}}^{(k+1)}\right)$ and
$\left(\rho^{(k)},\mathbf{m}^{(k)}\right)$ in practice.

After updating the acceleration parameter by $\tau^{(k+1)}=\frac{1}{2}\left(1+\sqrt{1+4\left(\tau^{(k)}\right)^2}\right)$, the next extrapolated point is then constructed by
\begin{equation}
    \begin{aligned}
        \left(\overline{\rho}^{(k+1)},\overline{\mathbf{m}}^{(k+1)}\right)
    =&
    \left(\rho^{(k+1)},\mathbf{m}^{(k+1)}\right)
    +
    \frac{\tau^{(k)}}{\tau^{(k+1)}}
    \left(
    \left(\rho^{(k+1)}_{\text{C}},\mathbf{m}_{\text{C}}^{(k+1)}\right)
    -
    \left(\rho^{(k+1)},\mathbf{m}^{(k+1)}\right)
    \right)\\
    &+
    \frac{\tau^{(k)}-1}{\tau^{(k+1)}}
    \left(
    \left(\rho^{(k+1)},\mathbf{m}^{(k+1)}\right)
    -
    \left(\rho^{(k)},\mathbf{m}^{(k)}\right)
    \right).
    \end{aligned}
    \label{eq:mfista_extrapolation}
\end{equation}

When the proximal candidate is accepted, i.e.,
$(\rho^{(k+1)},\mathbf m^{(k+1)})=(\rho_{\mathrm C}^{(k+1)},\mathbf m_{\mathrm C}^{(k+1)})$, the first correction term in
\eqref{eq:mfista_extrapolation} vanishes, and the update reduces to the standard
FISTA extrapolation \eqref{eq:fista-update_MFP}.
For this energy-only comparison to be the standard exact MFISTA comparison for $\mathcal Y+\chi_{\mathcal C}$, the initial accepted point and both candidates must belong to $\mathcal C\cap\operatorname{dom}\mathcal Y$. With recovered updates, the comparison is only an energy safeguard and must be accompanied by a separate constraint check.

\subsection{Proximal Splitting}
The other approach is to use the proximal operator of $\mathcal{Y}$ itself, 
which leads to proximal splitting methods such as Douglas-Rachford splitting, 
primal-dual methods, and ALG2/ADMM-type algorithms 
\cite{papadakis2014proximal,lavenant2018dynamical,dong2024admm}.

The proximal splitting approach treats the two terms in 
\eqref{eq:OT_split_form} in a symmetric implicit manner. 
Different from ISTA or FISTA, where the functional 
$\mathcal{Y}$ is linearized by an explicit gradient step, 
Douglas-Rachford splitting applies the proximal operators of both 
$\mathcal{Y}$ and $\chi_{\mathcal{C}}$. This is particularly useful for 
dynamical optimal transport, since the proximal operator of the kinetic 
energy density $\frac{|\mathbf{m}|^2}{2\rho}$ can be computed pointwise, 
while the proximal operator of $\chi_{\mathcal{C}}$ is exactly the projection 
onto the continuity-equation constraint set.

Let $\left(\overline{\rho}^{(k)},\overline{\mathbf{m}}^{(k)}\right)$ be the splitting variables, with initial $\left(\rho^{(0)},\mathbf{m}^{(0)}\right)$ and $\left(\overline{\rho}^{(0)},\overline{\mathbf{m}}^{(0)}\right)$, the Douglas-Rachford 
iteration for \eqref{eq:OT_split_form} reads
\begin{equation}
    \left\{
    \begin{aligned}
        \left(\overline{\rho}^{(k+1)},\overline{\mathbf{m}}^{(k+1)}\right)
        &=\left(\overline{\rho}^{(k)},\overline{\mathbf{m}}^{(k)}\right)
        +\alpha\left(\operatorname{Prox}_{\gamma \mathcal{Y}}
        \left(2\rho^{(k)}-\overline{\rho}^{(k)},2\mathbf{m}^{(k)}-\overline{\mathbf{m}}^{(k)}\right)
        -\left(\mathbf{\rho}^{(k)},\mathbf{m}^{(k)}\right)\right),\\
        \left(\rho^{(k+1)},\mathbf{m}^{(k+1)}\right)
        &=\operatorname{Prox}_{\gamma \chi_{\mathcal{C}}}
        \left(\overline{\rho}^{(k+1)},\overline{\mathbf{m}}^{(k+1)}\right),
    \end{aligned}
    \right.
    \label{eq:DR_iteration_general}
\end{equation}
where $\gamma>0$ is a fixed proximal parameter and $\alpha\in(0,2)$ is the relaxation 
parameter. The initialization is constrained by
$(\rho^{(0)},\mathbf m^{(0)})=P_{\mathcal C}(\overline\rho^{(0)},\overline{\mathbf m}^{(0)})$.
The barred variables are auxiliary splitting variables; the unbarred sequence is the projected sequence.

Note that in \eqref{eq:DR_iteration_general}, the proximal operator of $\mathcal{Y}$ can be evaluated according to Definition~\ref{def:proximal_operator}, leading to an optimization problem, which can be handled either by numerical schemes, such as Newton's method, or, in certain cases, by deriving a closed-form analytical solution. For example, the proximal operator of $\mathcal{Y}$ for dynamical optimal transport admits the following form\cite{papadakis2014proximal}.

\begin{proposition}
    For the classical dynamical optimal transport cost
\[
    \mathcal{Y}(\rho,\mathbf{m})
    =
    \int_0^1\int_{\mathcal{M}}
    \frac{|\mathbf{m}|^2}{2\rho}\,
    \mathrm{d}\sigma_g\,\mathrm{d}t,
\]
the proximal operator 
$\operatorname{Prox}_{\gamma\mathcal{Y}}$ is computed pointwise as
\begin{equation}
\operatorname{Prox}_{\gamma \mathcal{Y}}({\rho},{\mathbf{m}})
=
\begin{cases}
\left(
\rho^{\star},
\dfrac{\rho^{\star}\mathbf{m}}{\rho^{\star}+\gamma}
\right),
& \rho^{\star}>0,\\[1.0em]
(0,0), & \text{otherwise},
\end{cases}
\label{eq:prox_Y_DR}
\end{equation}
where $\rho^{\star}$ is the largest real root of the cubic equation
\begin{equation}
    P(X)
    =
    (X-\rho)(X+\gamma)^2
    -\frac{\gamma}{2}\|\mathbf{m}\|^2
    =
    0.
    \label{eq:prox_Y_cubic_DR}
\end{equation}
\end{proposition}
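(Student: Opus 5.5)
Since $\mathcal{Y}$ is an integral of the pointwise convex function $L$, its proximal map in $\mathcal H$ decouples: minimizing
\[
\gamma\mathcal Y(\rho^\star,\mathbf m^\star)+\tfrac12\|(\rho^\star,\mathbf m^\star)-(\rho,\mathbf m)\|_{\mathcal H}^2
\]
is equivalent to minimizing, for almost every $(t,x)$, the scalar--vector problem
\[
\Phi(X,\mathbf Y)=\gamma L(X,\mathbf Y)+\tfrac12(X-\rho)^2+\tfrac12|\mathbf Y-\mathbf m|^2,
\]
with the norm induced by $g$ on $T_x\mathcal M$. The plan is first to justify this reduction. $\Phi$ is proper, lsc and strongly convex, so it has a unique minimizer depending measurably on the data. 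Nonnegativity of $L$ keeps the pointwise minimizer in $\mathcal H$, and a pointwise minimizer then minimizes the integral. After that, the work is entirely finite-dimensional: identify the minimizer of $\Phi$.

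The key steps are as follows.

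\textbf{Step 1 (the case $X>0$).} Assume the minimizer has $X>0$. Then $L$ is smooth there, and the first-order conditions are
\[
\gamma\frac{\mathbf Y}{X}+\mathbf Y-\mathbf m=0,\qquad -\gamma\frac{|\mathbf Y|^2}{2X^2}+X-\rho=0.
\]
The first gives $\mathbf Y=X\mathbf m/(X+\gamma)$. Substituting into the second gives
\[
X-\rho=\frac{\gamma|\mathbf m|^2}{2(X+\gamma)^2},
\]
which is exactly $P(X)=0$.

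\textbf{Step 2 (uniqueness of the positive root).} For $X>0$ we have $P(X)=(X+\gamma)^2\,h(X)$ with
\[
h(X)=X-\rho-\frac{\gamma|\mathbf m|^2}{2(X+\gamma)^2}.
\]
The function $h$ is strictly increasing on $(-\gamma,\infty)$, so $P$ has at most one root in $(-\gamma,\infty)\supset(0,\infty)$. Since $P(X)<0$ for $X\le-\gamma$ unless $\mathbf m=0$, that root is the largest real root; the degenerate case $\mathbf m=0$ I will check by hand.

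\textbf{Step 3 (sufficiency and the case split).} If the largest root satisfies $\rho^\star>0$, then $(\rho^\star,\rho^\star\mathbf m/(\rho^\star+\gamma))$ is a critical point of the convex function $\Phi$ in the open set $\{X>0\}$, hence the global minimizer. If $\rho^\star\le0$, no minimizer with $X>0$ exists, since such a minimizer would produce a positive root by Step 1. The minimizer must then lie in $\operatorname{dom}L\cap\{X\le0\}=\{(0,\mathbf 0)\}$, which gives the second branch.

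I expect the main obstacle to be Step 3. The issue is that a minimizer lying in the interior of the domain is forced to satisfy the smooth optimality conditions, while the non-smooth point $(0,\mathbf 0)$ must be handled separately. To keep this clean, I will use the subdifferential characterization $0\in\gamma\partial L(X,\mathbf Y)+(X-\rho,\mathbf Y-\mathbf m)$ rather than a direct comparison of values. Alternatively I will use the consistency check that $\rho^\star\le0$ happens exactly when $P(0)=-\rho\gamma^2-\tfrac{\gamma}{2}|\mathbf m|^2\ge0$, using monotonicity of $h$; the case $\rho^\star=0$ needs $\mathbf m=0$ for $(0,\mathbf 0)$ to be consistent with the formula.
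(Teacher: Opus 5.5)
Your proof is correct, but it takes a different route from the paper.

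\textbf{Comparison of routes.} The paper first minimizes over the momentum for each fixed $X\ge0$. This gives $\mathbf m^\star=X\mathbf m/(X+\gamma)$, and $X=0$ forces zero momentum. What remains is the strictly convex one-variable function $\tfrac12(X-\rho)^2+\gamma|\mathbf m|^2/(2(X+\gamma))$ on $[0,\infty)$. Vacuum is then the boundary minimizer, selected exactly when the derivative at $0$ is nonnegative, i.e.\ when $\rho\gamma+|\mathbf m|^2/2\le0$.

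You instead keep both variables. You use stationarity on the open set $\{X>0\}$, with convexity for sufficiency. You obtain the vacuum branch by elimination: $\Phi$ is proper, lsc and strongly convex, so a minimizer exists, and $\operatorname{dom}L\cap\{X\le0\}=\{(0,\mathbf 0)\}$.

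What each buys:
\begin{itemize}
\item Your route needs the existence argument but avoids the marginal-function computation and the one-sided derivative test. You also justify the reduction from $\mathcal H$ to the pointwise problem, which the paper takes for granted.
\item The paper's route is more constructive and makes the vacuum threshold explicit.
\end{itemize}
Your Step 3 as written is already complete. It uses only that $\rho^\star$ is a root and that no root exceeds it. So Step 2 is not needed, and neither is the subdifferential of $L$ at $(0,\mathbf 0)$ that you mention as a fallback.

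\textbf{Two incorrect side claims.} Neither affects the proof, but both are wrong.
\begin{itemize}
\item In Step 2, the claim that $P(X)<0$ for $X\le-\gamma$ when $\mathbf m\ne\mathbf 0$ fails whenever $\rho<-\gamma$. For $X\in(\rho,-\gamma)$ the term $(X-\rho)(X+\gamma)^2$ is positive. For example, $\gamma=1$, $\rho=-10$ and small $\mathbf m$ give $P(-2)>0$, and $P$ then has three real roots. Such inputs do occur in Douglas--Rachford, since $2\rho^{(k)}-\overline{\rho}^{(k)}$ can be negative. The correct reason a root in $(-\gamma,\infty)$ is the largest is simply that any larger root would also lie in $(-\gamma,\infty)$, contradicting uniqueness there.
\item In your closing remark, $\rho^\star=0$ does not require $\mathbf m=\mathbf 0$. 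It occurs whenever $\rho\gamma+|\mathbf m|^2/2=0$, and $(0,\mathbf 0)$ is then the correct output; both branches of the formula give it.
\end{itemize}
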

\begin{proof}
For a fixed nonnegative output density $X$, minimizing the local proximal objective over momentum gives $\mathbf m^\star=X\mathbf m/(X+\gamma)$. The remaining scalar objective, up to a constant independent of $X$, is
\[
\tfrac12(X-\rho)^2+\frac{\gamma|\mathbf m|^2}{2(X+\gamma)},\qquad X\ge0.
\]
It is strictly convex. Its derivative is $X-\rho-\gamma|\mathbf m|^2/(2(X+\gamma)^2)$. If the derivative at zero is nonnegative, the minimizer is zero. Otherwise it has a unique positive zero, which is the positive, and hence largest, real root of~\eqref{eq:prox_Y_cubic_DR}. This gives~\eqref{eq:prox_Y_DR}, including vacuum~\cite{papadakis2014proximal}.
\end{proof}

Consequently, the Douglas-Rachford method alternates between two implicit 
operations: a local proximal update for the transportation cost 
$\mathcal{Y}$, and a global projection onto the constraint set 
$\mathcal{C}$. The former is reduced to solving independent cubic equations 
at each time--space quadrature node when the discrete energy and metric use matching diagonal weights, while the latter is reduced to solving a 
Poisson equation on the time-space manifold. In this sense, Douglas-Rachford 
splitting separates the nonlinear transport cost and the linear continuity 
constraint into two computationally tractable subproblems.

For an exact projection, standard Douglas--Rachford theory applies under its usual solvability assumptions~\cite{lions1979splitting,papadakis2014proximal}. An intermediate projected density need not be nonnegative; the kinetic proximal map has the appropriate extended-value domain. At termination, proximity of the two proximal outputs and the constraint residual must both be checked. A general nonlocal interaction does not necessarily admit a pointwise proximal calculation.

\section{Discretization}\label{sec:discretization}

Consider a shape-regular triangulated approximation $\mathcal M_h$ of $\mathcal M$, with mesh size $h$. We denote the triangulation by $\mathcal T_h$ and its vertices by $\mathcal V_h=\{V_i\}_{i=1}^{N_v}$. The piecewise linear finite element space is
\[
S_h=\{v_h\in C^0(\mathcal M_h):v_h|_T\in\mathbb P^1(T),\ T\in\mathcal T_h\}.
\]
We divide $[0,1]$ into $N_t\ge2$ uniform subintervals, with $t_j=j\tau$ and $\tau=1/N_t$. Here $\tau$ without a superscript is the time step; $\tau^{(k)}$ continues to denote the FISTA acceleration parameter. Let $S_\tau$ be the continuous piecewise linear temporal space with nodal basis $\{\phi_j\}_{j=0}^{N_t}$, and set $Q_h=S_\tau\otimes S_h$. Thus $u_h(t,x)=\sum_{j=0}^{N_t}u_{h,j}(x)\phi_j(t)$.

\subsection{Inner products, energy, and temporal boundary conditions}
Let $\{\psi_i\}_{i=1}^{N_v}$ be the nodal basis of $S_h$. Write
\[
M_{i\ell}=(\psi_i,\psi_\ell)_{\mathcal M_h},\quad
K_{i\ell}=(\nabla_{g_h}\psi_i,\nabla_{g_h}\psi_\ell)_{\mathcal M_h},\quad
A_i=\int_{\mathcal M_h}\psi_i\,\mathrm d\sigma_h>0,
\]
and use the trapezoidal time weights $w_0=w_{N_t}=\tau/2$ and $w_j=\tau$ otherwise. The Poisson equation uses the consistent spatial mass matrix $M$. The optimization metric and local energy use matching lumped weights:
\begin{equation}\label{eq:discrete_metric_energy}
\begin{split}
\|\mathbf z_h\|_h^2
&=\sum_{j=0}^{N_t}\sum_{i=1}^{N_v}w_jA_i
\bigl(|\rho_{h,ji}|^2+|\mathbf m_{h,ji}|^2\bigr),\\
\mathcal Y_h(\rho_h,\mathbf m_h)
&=\sum_{j=0}^{N_t}\sum_{i=1}^{N_v}w_jA_i
\bigl[L(\rho_{h,ji},\mathbf m_{h,ji})+F_E(V_i,\rho_{h,ji})\bigr].
\end{split}
\end{equation}
The matrix of this metric is denoted by $W_h$. In particular, the variational derivatives $\delta_{\rho_h}\mathcal Y_h$ and $\delta_{\mathbf m_h}\mathcal Y_h$ are gradients in this metric, not unweighted derivatives of the coefficient vector. For a nonlocal $\mathcal F$, its discretization replaces the last sum and its gradient or proximal map must be computed accordingly. The local OT proximal formula in Section~3 is nodewise precisely because the energy and metric in~\eqref{eq:discrete_metric_energy} have the same weights.

For continuous endpoint data, the endpoint interpolants are normalized separately to unit surface mass:
\[
\rho_{\ell,h}=\frac{I_hT_h\rho_\ell}{\int_{\mathcal M_h}I_hT_h\rho_\ell\,\mathrm d\sigma_h},\qquad \ell=0,1,
\]
provided the denominators are positive. Here $T_h$ transfers a function from $\mathcal M$ to $\mathcal M_h$ by the geometric correspondence, and $I_h$ is nodal interpolation. This normalization is part of the endpoint approximation; its error is retained in the analysis. Nonsmooth endpoint data require a specified nodal or cell-to-node representation in place of $I_h$.

For a nodal sequence, define $(D_tu)_0=(u_1-u_0)/\tau$ and $(D_tu)_j=(u_j-u_{j-1})/\tau$ for $1\le j\le N_t$. The homogeneous-Neumann matrix for the second derivative is
\begin{equation}\label{eq:second-order_difference}
(D_{tt}u)_j=\begin{cases}
2(u_1-u_0)/\tau^2,&j=0,\\
(u_{j-1}-2u_j+u_{j+1})/\tau^2,&1\le j<N_t,\\
2(u_{N_t-1}-u_{N_t})/\tau^2,&j=N_t.
\end{cases}
\end{equation}
Thus $D_{tt}$ approximates $\partial_{tt}$, and $-D_{tt}$ is nonnegative in the trapezoidal metric. For input $\mathbf z_h=(\rho_h,\mathbf m_h)$ set
\begin{equation}\label{eq:discrete_boundary_load}
\begin{gathered}
a_{0,h}=\rho_{0,h}-\rho_{h,0},\qquad
a_{1,h}=\rho_{1,h}-\rho_{h,N_t},\\
b_{h,j}(\mathbf z_h)=\begin{cases}
-2a_{0,h}/\tau,&j=0,\\
0,&1\le j<N_t,\\
2a_{1,h}/\tau,&j=N_t.
\end{cases}
\end{gathered}
\end{equation}
Indeed, eliminating ghost values from $(\varphi_1-\varphi_{-1})/(2\tau)=a_{0,h}$ and $(\varphi_{N_t+1}-\varphi_{N_t-1})/(2\tau)=a_{1,h}$ gives exactly these right-hand-side corrections for $-\partial_{tt}\varphi$. In particular, $b_h=0$ when the input endpoints are already fixed.

\subsection{Gradient-enhanced approximation of the projection}
The temporal polynomial preserving recovery (PPR) operator $G_\tau:S_\tau\to S_\tau$ is obtained by quadratic fitting~\cite{zhang2005recovery,dong2024admm,jiang2026fdm}. On this uniform grid its nodal values are
\begin{equation}\label{eq:PPR_stencil}
(G_\tau u)_j=\begin{cases}
(-3u_0+4u_1-u_2)/(2\tau),&j=0,\\
(u_{j+1}-u_{j-1})/(2\tau),&1\le j<N_t,\\
(3u_{N_t}-4u_{N_t-1}+u_{N_t-2})/(2\tau),&j=N_t.
\end{cases}
\end{equation}
Writing $u_\tau=U_\tau^\top\phi_t$, where $\phi_t=(\phi_0,\ldots,\phi_{N_t})^\top$, gives
\begin{equation}\label{eq:PPR_recovery_representation}
G_\tau u_\tau=(B_tU_\tau)^\top\phi_t.
\end{equation}

The spatial parametric polynomial preserving recovery (PPPR) operator reconstructs both geometry and function values~\cite{dong2020parametric}. At a vertex, project a connected vertex patch onto a local plane and fit a quadratic parametrization $r_i:\mathbb R^2\to\mathbb R^3$ and a quadratic scalar polynomial $q_i$. If $J_i=Dr_i(0)$ has rank two, the recovered gradient is
\begin{equation}\label{eq:PPPR_construction}
(G_hu_h)(V_i)=J_i(J_i^\top J_i)^{-1}\nabla q_i(0).
\end{equation}
One concrete construction takes the initial plane orthogonal to the normalized area-weighted average of incident face normals, scales local coordinates by the patch diameter, and uses unweighted least squares. Starting from the one-ring patch, complete vertex rings are added until the quadratic design matrix has rank six. QR factorization can be used for the fits. A degenerate plane or a rank-deficient fitted parametrization requires repairing or rejecting the patch; it must not be replaced by an unspecified inverse. Uniform conditioning and the geometric hypotheses of the recovery theory are additional requirements for asymptotic estimates.

The fitted normal $\mathbf n_i$ is the unit normal to the range of $J_i$. Nodal momenta are represented in an orthonormal basis of the plane $\mathbf n_i^\perp$; we denote the resulting density--momentum space by $X_h$. Tangency here is with respect to the reconstructed nodal geometry, rather than simultaneous tangency to every incident flat face. The matrices of the recovered gradient satisfy
\begin{equation}\label{eq:PPPR_recovery_representation}
G_hu_h=\bigl((B_xU_h)^\top\psi,(B_yU_h)^\top\psi,(B_zU_h)^\top\psi\bigr),
\qquad \psi=(\psi_1,\ldots,\psi_{N_v})^\top.
\end{equation}
The recovered divergence used below is the componentwise trace
$G_h\cdot\mathbf m_h=B_xm_x+B_ym_y+B_zm_z$ at the nodes, followed by interpolation. This definition does not assert that it is the negative adjoint of $G_h$ in the optimization metric.

For $u_h,v_h\in Q_h$ define
\begin{equation}\label{eq:poisson_bilinear}
\begin{split}
\mathfrak a_h(u_h,v_h)
={}&\sum_{j=0}^{N_t}w_j\bigl[-(D_{tt}u_{h,j},v_{h,j})_{\mathcal M_h}
+(\nabla_{g_h}u_{h,j},\nabla_{g_h}v_{h,j})_{\mathcal M_h}\bigr]\\
={}&\sum_{j=1}^{N_t}\tau\left(\frac{u_{h,j}-u_{h,j-1}}{\tau},
\frac{v_{h,j}-v_{h,j-1}}{\tau}\right)_{\mathcal M_h}
+\sum_{j=0}^{N_t}w_j(\nabla_{g_h}u_{h,j},\nabla_{g_h}v_{h,j})_{\mathcal M_h}.
\end{split}
\end{equation}
The second equality follows by summation by parts. Hence this form is positive definite on
\[
V_h=\left\{v_h\in Q_h:\sum_jw_j(v_{h,j},1)_{\mathcal M_h}=0\right\}.
\]
The ordinary FDM--sFEM right-hand side contains $D_t\rho_h+\operatorname{div}_{g_h}\mathbf m_h+b_h$. Its gradient-enhanced counterpart is
\begin{equation}\label{eq:recovered_load}
\ell_h(\mathbf z_h;v_h)=\sum_{j=0}^{N_t}w_j
\bigl(G_\tau\rho_{h,j}+G_h\cdot\mathbf m_{h,j}+b_{h,j}(\mathbf z_h),v_{h,j}\bigr)_{\mathcal M_h}.
\end{equation}
This functional is affine in $\mathbf z_h$. We compute $\varphi_h\in V_h$ from
\begin{equation}\label{eq:Poisson-gradient_enhanced}
\mathfrak a_h(\varphi_h,v_h)=\ell_h(\mathbf z_h;v_h),\qquad v_h\in V_h.
\end{equation}
Equivalently, the full-space right-hand side is centered by subtracting
\begin{equation}\label{eq:compatibility_defect}
c_h(\mathbf z_h)=\frac{\ell_h(\mathbf z_h;1)}{|\mathcal M_h|}.
\end{equation}
In coefficient form one may solve the resulting system with a zero-mean constraint or use its inverse on $V_h$. Centering ensures solvability, but does not enforce the discrete continuity equation. The discarded constant $c_h$ must therefore be monitored as a compatibility defect. The Poisson matrix and its factorization can be reused across iterations and trial step sizes on a fixed mesh.

Set $\mathbb G_h\varphi_h=(G_\tau\varphi_h,G_h\varphi_h)$. Let $\mathcal R_h$ reset the two density endpoint layers to $\rho_{0,h},\rho_{1,h}$ and project nodal momenta by $\Pi_i=I-\mathbf n_i\mathbf n_i^\top$. The recovered update is
\begin{equation}\label{eq:GE_projection_map}
\widetilde P_h\mathbf z_h
=\mathcal R_h(\mathbf z_h+\mathbb G_h\varphi_h).
\end{equation}
Before this endpoint and tangency enforcement, its components are exactly the original gradient-enhanced corrections
\[
\rho_{h,j}^{\star}=\rho_{h,j}+G_\tau\varphi_{h,j},\qquad
\mathbf m_{h,j}^{\star}=\mathbf m_{h,j}+G_h\varphi_{h,j}.
\]
The endpoint reset is necessary because the recovered boundary derivative need not equal the prescribed Neumann datum. Its contribution is retained in the error bounds. No density clipping or time-slice mass renormalization is included in~\eqref{eq:GE_projection_map}.

\subsection{Constraint residual and exact discrete projection}
To specify what is being checked, define a weak discrete continuity functional using the actual piecewise linear fields:
\begin{equation}\label{eq:discrete_weak_constraint}
\mathcal B_h(\mathbf z_h,v_h)=\int_0^1\int_{\mathcal M_h}
\bigl(\rho_h\partial_tv_h+\mathbf m_h\cdot\nabla_{g_h}v_h\bigr)\,\mathrm d\sigma_h\,\mathrm dt,
\quad
B_h(v_h)=(\rho_{1,h},v_{h,N_t})_{\mathcal M_h}-(\rho_{0,h},v_{h,0})_{\mathcal M_h}.
\end{equation}
Let $A_h\mathbf z_h=\mathbf b_h$ collect $\mathcal B_h(\mathbf z_h,\phi_j\psi_i)=B_h(\phi_j\psi_i)$ for all tensor-product basis functions, together with the endpoint coefficient equations. These are exact integrals of the finite element functions. Thus $\mathbf b_h$ is a constraint vector, distinct from the ghost-point load $b_{h,j}$. The residual uses this specified basis and the Euclidean norm of its coefficient vector.

If $\mathcal C_h=\{\mathbf z_h\in X_h:A_h\mathbf z_h=\mathbf b_h\}$ is nonempty, its exact $W_h$-orthogonal projection satisfies
\begin{equation}\label{eq:exact_discrete_projection}
P_{\mathcal C_h}^{W_h}\mathbf z_h=\mathbf z_h-W_h^{-1}A_h^\top\lambda_h,
\qquad
A_hW_h^{-1}A_h^\top\lambda_h=A_h\mathbf z_h-\mathbf b_h.
\end{equation}
Here matrices act in scalar and two-component nodal tangent coordinates; redundant constraints are handled by a generalized inverse. Feasibility of $\mathcal C_h$ is a separate linear-algebra condition. Equation~\eqref{eq:exact_discrete_projection} is a comparison identity, not the system used by~\eqref{eq:Poisson-gradient_enhanced}. In particular, replacing derivatives by recovered derivatives does not identify the FDM--sFEM matrix with this normal operator. We therefore call $\widetilde P_h$ an approximation of the continuous projection, not an exact discrete proximal map.

\subsection{Gradient-enhanced algorithms}
In the following algorithms, $\mathbf z_h=(\rho_h,\mathbf m_h)$ and $\nabla_h\mathcal Y_h=(\delta_{\rho_h}\mathcal Y_h,\delta_{\mathbf m_h}\mathcal Y_h)$. A suitable positive starting density is the linear interpolation $(1-t_j)\rho_{0,h}+t_j\rho_{1,h}$ when both endpoints are strictly positive, with zero momentum. This satisfies the endpoint and mass conditions but need not satisfy continuity. The explicit methods require a positive initial density at every quadrature node; zero endpoint data cannot be replaced by positive data without changing the problem.

A common trial routine $\operatorname{Trial}_h(\mathbf y_h;\mathbf z_h,\eta)$ computes
\begin{equation}\label{eq:trial_step}
\mathbf z_h^{1/2}=\mathbf y_h-\eta\nabla_h\mathcal Y_h(\mathbf y_h),\qquad
\mathbf z_h^{+}=\widetilde P_h\mathbf z_h^{1/2}.
\end{equation}
The routine requires $\mathbf y_h$ to have finite energy and gradient and density at least $\varepsilon_\rho>0$. It accepts a trial only if the candidate also satisfies this bound and the discrete version of~\eqref{eq:backtracking_condition} holds in $\|\cdot\|_h$. If a monotone safeguard is requested, it additionally checks~\eqref{eq:monotone_condition} against $\mathbf z_h$. Otherwise it replaces $\eta$ by $\mu\eta$, $0<\mu<1$, and recomputes both parts of~\eqref{eq:trial_step}. A prescribed minimum step $\eta_{\min}>0$ and maximum number of trials $N_{\rm bt}$ make this a finite procedure. An invalid base point or exhausted search returns failure, rather than a clipped density or an unaccepted step. Poisson solves use a prescribed residual tolerance.

\begin{algorithm}[htbp]
\caption{Gradient-enhanced ISTA for MFP}\label{alg:GradientEnhanced_ISTA}
\begin{algorithmic}[1]
\Require Endpoints, $\mathcal Y_h$, positive $\mathbf z_h^{(0)}$, trial and stopping parameters, $K_{\max}$.
\For{$k=0,\ldots,K_{\max}-1$}
\State Compute $(\mathbf z_h^{(k+1)},\eta^{(k)})$ by $\operatorname{Trial}_h(\mathbf z_h^{(k)};\mathbf z_h^{(k)},\eta_{\rm init}^{(k)})$.
\State If the trial fails, return the last accepted iterate and a failure status.
\State Check the stopping diagnostics below and return if satisfied.
\EndFor
\State Return the last accepted iterate with an iteration-limit status.
\end{algorithmic}
\end{algorithm}
The correction in this algorithm is added to the half-step pair, not to the old iterate. Without the safeguards, its basic update is $\mathbf z_h^{(k+1)}=\widetilde P_h(\mathbf z_h^{(k)}-\eta^{(k)}\nabla_h\mathcal Y_h(\mathbf z_h^{(k)}))$.

\begin{algorithm}[htbp]
\caption{Gradient-enhanced FISTA with backtracking and restart}\label{alg:Gradient_enhanced_FISTA}
\begin{algorithmic}[1]
\Require The data of Algorithm~\ref{alg:GradientEnhanced_ISTA}; choice of monotone and gradient-restart safeguards.
\State Set $\overline{\mathbf z}_h^{(0)}=\mathbf z_h^{(0)}$ and $\tau^{(0)}=1$.
\For{$k=0,\ldots,K_{\max}-1$}
\State Try $\operatorname{Trial}_h(\overline{\mathbf z}_h^{(k)};\mathbf z_h^{(k)},\eta_{\rm init}^{(k)})$.
\If{the trial fails, or the requested test~\eqref{eq:gradient_restart} triggers}
\State Reset $\overline{\mathbf z}_h^{(k)}=\mathbf z_h^{(k)}$ and $\tau^{(k)}=1$.
\State Recompute the trial from this base point; on failure return the last accepted iterate and a failure status.
\EndIf
\State Accept $\mathbf z_h^{(k+1)}$ and its step $\eta^{(k)}$.
\State Set $\tau^{(k+1)}=(1+\sqrt{1+4(\tau^{(k)})^2})/2$ and $\omega^{(k)}=(\tau^{(k)}-1)/\tau^{(k+1)}$.
\State Set $\overline{\mathbf z}_h^{(k+1)}=\mathbf z_h^{(k+1)}+\omega^{(k)}(\mathbf z_h^{(k+1)}-\mathbf z_h^{(k)})$.
\State Check the stopping diagnostics and return if satisfied.
\EndFor
\State Return the last accepted iterate with an iteration-limit status.
\end{algorithmic}
\end{algorithm}
The restart inner product uses $W_h$. MFISTA is a distinct option: replace candidate acceptance and extrapolation by~\eqref{eq:mfista_monotone_selection} and~\eqref{eq:mfista_extrapolation}, after the domain and backtracking checks. Its candidate-selection branch is not part of the basic FISTA error recursion proved below.

\begin{algorithm}[htbp]
\caption{Gradient-enhanced Douglas--Rachford splitting}\label{alg:GE_DR}
\begin{algorithmic}[1]
\Require Endpoints, $\mathcal Y_h$, auxiliary $\overline{\mathbf z}_h^{(0)}$, fixed $\gamma>0$, $0<\alpha<2$, stopping parameters, $K_{\max}$.
\State Set $\mathbf z_h^{(0)}=\widetilde P_h\overline{\mathbf z}_h^{(0)}$.
\For{$k=0,\ldots,K_{\max}-1$}
\State Compute $\mathbf p_h^{(k)}=\operatorname{Prox}_{\gamma\mathcal Y_h}^{W_h}(2\mathbf z_h^{(k)}-\overline{\mathbf z}_h^{(k)})$.
\State Set $\overline{\mathbf z}_h^{(k+1)}=\overline{\mathbf z}_h^{(k)}+\alpha(\mathbf p_h^{(k)}-\mathbf z_h^{(k)})$.
\State Set $\mathbf z_h^{(k+1)}=\widetilde P_h\overline{\mathbf z}_h^{(k+1)}$.
\State Check the splitting gap and constraint diagnostics; return if satisfied.
\EndFor
\State Return the last projected and energy-proximal outputs with an iteration-limit status.
\end{algorithmic}
\end{algorithm}

For ISTA/FISTA, stopping requires both a small relative step and a small constraint residual, together with endpoint, mass, and domain checks:
\begin{equation}\label{eq:stopping_diagnostics}
\begin{split}
r_{\rm step}^{(k+1)}&=\frac{\|\mathbf z_h^{(k+1)}-\mathbf z_h^{(k)}\|_h}{\max\{1,\|\mathbf z_h^{(k)}\|_h\}},\qquad
r_{\mathcal C}(\mathbf z_h)=\frac{\|A_h\mathbf z_h-\mathbf b_h\|_2}{\max\{1,\|\mathbf b_h\|_2\}},\\
r_{\rm mass}(\mathbf z_h)&=\max_{0\le j\le N_t}\left|\sum_iA_i\rho_{h,ji}-1\right|.
\end{split}
\end{equation}
Each diagnostic is compared with a specified tolerance; the minimum density, endpoint discrepancy, compatibility defect~\eqref{eq:compatibility_defect}, and linear residual are recorded. For DR, the relative gap $\|\mathbf p_h^{(k)}-\mathbf z_h^{(k)}\|_h/\max\{1,\|\mathbf z_h^{(k)}\|_h\}$ replaces the step test, and continuity is checked for both outputs. Since only the energy-proximal output is automatically in the kinetic-energy domain, their difference and any negative projected density must be reported. These are termination diagnostics, not optimality certificates for the recovered iteration.

\section{Discretization error analysis}\label{sec:error_analysis}
The analysis separates the accuracy of a recovered Poisson solve from perturbations of the outer iteration. We first specify interpolation and lifting, then derive projection and residual identities, and finally estimate finite sequences of basic ISTA/FISTA and Douglas--Rachford steps. None of these statements identifies a recovered map with an exact discrete projection.

\subsection{Lifting, interpolation, and reference problems}
Let $p_h:\mathcal M_h\to\mathcal M$ be the geometric correspondence, which is a bijection for the mesh families considered here. For scalars, $T_hu=u\circ p_h$ and $T_h^{-1}u_h=u_h\circ p_h^{-1}$. For vectors, the lifted ambient field is projected onto $T\mathcal M$. We write $\mathcal J_h\mathbf z_h=T_h^{-1}\mathbf z_h$ for this pairwise lift and retain $T_h^{-1}$ for scalar and recovered-vector expressions. Define $\mathcal I_h\mathbf z$ by temporal and spatial nodal interpolation and by projection of momentum values onto the reconstructed nodal tangent planes. It does not reset density endpoints; the endpoint approximation is treated separately.

\begin{assumption}[Geometric and algebraic setting]\label{ass:geometry}
The triangulations are connected and shape regular, the recovery fits have the ranks specified in Section~\ref{sec:discretization}, and the geometric correspondence is uniformly regular. The lift is bounded on the ambient discrete pair space, and is norm-equivalent on $X_h$: for constants $C_{\rm lift},C_{\rm inv}$,
\[
\|\mathcal J_h\mathbf v_h\|_{\mathcal H}\le C_{\rm lift}\|\mathbf v_h\|_h,
\qquad
\|\mathbf v_h\|_h\le C_{\rm inv}\|\mathcal J_h\mathbf v_h\|_{\mathcal H}
\quad(\mathbf v_h\in X_h).
\]
Uniform statements require these constants to be independent of $h,\tau$. Such equivalence is compatible with the usual surface lifting estimates when the reconstructed nodal normals approach the smooth normal and $h$ is sufficiently small~\cite{dziuk2013surface,dong2020parametric}. It is not asserted for arbitrary degenerate patches.
\end{assumption}

\begin{lemma}[Interpolation]\label{lem:interpolation}
For scalar $u\in H^2(\mathcal T;H^2(\mathcal M))$, under the usual piecewise linear interpolation and lifting estimates,
\begin{equation}\label{eq:interpolation_bound}
\|u-T_h^{-1}I_\tau I_hT_hu\|_{L^2(\mathcal Q)}
\le C(h^2+\tau^2)\|u\|_{H^2(\mathcal T;H^2(\mathcal M))}.
\end{equation}
For a smooth pair write $\varepsilon_{I,h}(\mathbf z)=\|\mathbf z-\mathcal J_h\mathcal I_h\mathbf z\|_{\mathcal H}$. In addition to the componentwise interpolation bound, this quantity includes the error of the reconstructed tangent planes.
\end{lemma}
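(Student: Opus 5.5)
The plan is to split the tensor-product interpolation error into a temporal and a spatial part and bound each with one-dimensional and surface interpolation estimates. Set $\tilde u=T_hu$ on $\mathcal T\times\mathcal M_h$. Then
\[
\tilde u-I_\tau I_h\tilde u=(\tilde u-I_\tau\tilde u)+I_\tau(\tilde u-I_h\tilde u).
\]
We first prove the bound on the discrete surface and afterwards lift with $T_h^{-1}$. The lift preserves $L^2$ norms up to a constant, because the area-element ratio $\mathrm d\sigma_g/\mathrm d\sigma_h$ is uniformly bounded under Assumption~\ref{ass:geometry}. Since $T_h^{-1}T_hu=u$, the left side of~\eqref{eq:interpolation_bound} is therefore bounded by $C\|\tilde u-I_\tau I_h\tilde u\|_{L^2(\mathcal T\times\mathcal M_h)}$.

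For the temporal term, apply the standard one-dimensional piecewise linear estimate pointwise in $x$:
\[
\|v-I_\tau v\|_{L^2(\mathcal T)}\le C\tau^2\|\partial_{tt}v\|_{L^2(\mathcal T)}.
\]
Integrating over $\mathcal M_h$ gives $C\tau^2\|\partial_{tt}\tilde u\|_{L^2}$, and this is controlled by $\|u\|_{H^2(\mathcal T;L^2(\mathcal M))}$ through the lift equivalence.

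For the spatial term, use the $L^2(\mathcal T)$-stability of $I_\tau$ on nodal values. Concretely,
\[
\|I_\tau w\|_{L^2(\mathcal T)}^2\le C\sum_j w_j|w(t_j)|^2,
\]
and the sum is bounded through the embedding $H^1(\mathcal T)\subset C(\overline{\mathcal T})$ together with a scaled trace inequality on each subinterval:
\[
|w(t_j)|^2\le C\bigl(\tau^{-1}\|w\|_{L^2(I_j)}^2+\tau\|\partial_tw\|_{L^2(I_j)}^2\bigr).
\]
With $w=\tilde u-I_h\tilde u$, which commutes with $\partial_t$, this reduces the term to
\[
\|\tilde u-I_h\tilde u\|_{L^2}+\tau\|\partial_t(\tilde u-I_h\tilde u)\|_{L^2}.
\]
The surface interpolation estimate of Dziuk and Elliott then gives
\[
\|v-I_hv\|_{L^2(\mathcal M_h)}\le Ch^2\|v\|_{H^2(\mathcal M)},
\]
where $v$ is the extension. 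The geometric error between $\mathcal M_h$ and $\mathcal M$ is itself $O(h^2)$ and uses only the $C^3$ regularity of $\mathcal M$. Applied to $\tilde u$ and to $\partial_t\tilde u$, this yields $Ch^2(1+\tau)\|u\|_{H^1(\mathcal T;H^2(\mathcal M))}$. Summing the temporal and spatial parts gives~\eqref{eq:interpolation_bound}.

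The second sentence of the lemma is a definition together with a remark, so it needs only an accounting argument. Write
\[
\mathbf m-\mathcal J_h\mathcal I_h\mathbf m=(\mathbf m-T_h^{-1}I_\tau I_hT_h\mathbf m)+T_h^{-1}(\text{normal-projection defect}).
\]
The first part is bounded componentwise by~\eqref{eq:interpolation_bound}. The second part is $(\mathbf n-\mathbf n_i)$-dependent and is the reconstructed-tangent-plane contribution included in $\varepsilon_{I,h}$.

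The main obstacle is the mixed term, specifically the stability of $I_\tau$ in $L^2$, which needs nodal evaluation and hence $H^1$ in time. I will handle it with the trace inequality above rather than seek a sharper commuting argument. I also need to check that the $H^2$ norm of the extension is bounded by the surface norm on the lifted patch.
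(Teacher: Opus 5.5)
Your proof is correct and follows the paper's route: the same split into $u-I_\tau u$ and $I_\tau(u-T_h^{-1}I_hT_hu)$, the one-dimensional estimate for the first piece, and surface interpolation combined with $L^2$-stability of $I_\tau$ for the second, with lifting constants absorbed into $C$. The differences are small. You make the stability of $I_\tau$ explicit through a scaled trace inequality on the interpolation error, whereas the paper applies the spatial estimate at the time nodes first. In the momentum accounting, $\mathcal J_h$ also projects the lifted field onto $T\mathcal M$, so your displayed identity holds only after applying that pointwise orthogonal projection to both pieces, using that $\mathbf m$ is tangent; its nonexpansiveness then gives the bound, as in the paper's closing remark.
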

\begin{proof}
Decompose the scalar error into $u-I_\tau u$ and $I_\tau(u-T_h^{-1}I_hT_hu)$. The one-dimensional interpolation estimate bounds the first term by $C\tau^2\|\partial_{tt}u\|_{L^2(\mathcal Q)}$. Applying the spatial $L^2$ interpolation estimate at time nodes, followed by the temporal norm bound for piecewise linear interpolation, bounds the second term by $Ch^2\|u\|_{H^2(\mathcal T;H^2(\mathcal M))}$. The lifting constants are absorbed into $C$. For momentum, inserting the unprojected nodal interpolant separates its interpolation error from the nodal normal-projection error; the orthogonal projection onto the smooth tangent bundle cannot increase the latter comparison norm.
\end{proof}
This lemma concerns interpolation. An FDM--sFEM solution has an additional equation error and cannot be substituted for the interpolant in~\eqref{eq:interpolation_bound} without further analysis.

For a smooth input $\mathbf z$, let $\varphi[\mathbf z]$ solve~\eqref{eq:projection_weak_poisson}. Define $\varphi_h[\mathbf z]\in V_h$ to be the matched-data reference FDM--sFEM solution with load
\begin{equation}\label{eq:reference_load}
\ell_h^{\rm ref}(\mathbf z;v_h)
=\sum_jw_j\bigl(T_h[f(t_j)+b_j(a_0,a_1)],v_{h,j}\bigr)_{\mathcal M_h},
\end{equation}
where $f,a_0,a_1$ are the exact data in~\eqref{eq:Poisson-Equation} and $b_j$ has the signs in~\eqref{eq:discrete_boundary_load}. The load is restricted to $V_h$, equivalently centered. This reference uses exact input data, whereas~\eqref{eq:recovered_load} differentiates discrete data by recovery. Set
\begin{equation}\label{eq:recovery_error_definition}
\varepsilon_{{\rm rec},h}(\mathbf z)
=\|\mathbb G\varphi[\mathbf z]-\mathcal J_h\mathbb G_h\varphi_h[\mathbf z]\|_{\mathcal H}.
\end{equation}

\begin{proposition}[Recovery estimate used from the literature]\label{prop:imported_recovery}
Suppose the matched-data reference problem satisfies the geometric, mesh, and regularity hypotheses of the temporal and spatial recovery results of Jiang et al.~\cite{jiang2026fdm}. In particular, let $\mathcal M_h$ satisfy their $\mathcal O(h^{2\sigma})$ irregularity condition, with $\sigma>0$, and let $u=\varphi[\mathbf z]\in H^4(\mathcal T;L^2(\mathcal M))\cap H^2(\mathcal T;H^2(\mathcal M))$. Let the semidiscrete solution $u_N=\sum_j u^j\phi_j$ have the spatial regularity appearing below. In the mean-zero data regime of those estimates, their combined consequence is
\begin{equation}\label{equ:main_superconvergence_result}
\varepsilon_{{\rm rec},h}(\mathbf z)
\le C\bigl(h^2\mathcal A_1+h^{1+\min\{1,\sigma\}}\mathcal A_2+\tau^2\mathcal A_3\bigr),
\end{equation}
where, with all mixed norms over $\mathcal T\times\mathcal M$,
\begin{align*}
\mathcal A_1&=\|u_N\|_{L^2(\mathcal T;W^{3,\infty}(\mathcal M))}
+\|\partial_tu_N\|_{L^2(\mathcal T;H^2(\mathcal M))},\\
\mathcal A_2&=\|u_N\|_{L^2(\mathcal T;H^3(\mathcal M))}
+\|u_N\|_{L^2(\mathcal T;W^{2,\infty}(\mathcal M))},\\
\mathcal A_3&=\|u\|_{H^3(\mathcal T;L^2(\mathcal M))}
+\|\nabla_g\partial_{tt}u\|_{L^2(\mathcal Q)}
+\|\nabla_g\Delta_g^{-1}\partial_{tt}f\|_{L^2(\mathcal Q)}.
\end{align*}
Here $\Delta_g^{-1}$ acts on spatially mean-zero functions. Any Neumann-data lifting must satisfy the corresponding regularity hypotheses as well.
\end{proposition}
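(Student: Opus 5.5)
The estimate is imported from the literature, so the argument is mainly bookkeeping. I would split the error at the semidiscrete solution and then match each piece, and its hypotheses, to a specific result of Jiang et al.~\cite{jiang2026fdm}.

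\textbf{Splitting the error.} Write $u=\varphi[\mathbf z]$ and $u_h=\varphi_h[\mathbf z]$. Let $u_N=\sum_j u^j\phi_j$ be the time-discrete, space-continuous solution of the FDM problem with the same centered reference load~\eqref{eq:reference_load}. Then
\[
\mathbb G u-\mathcal J_h\mathbb G_h u_h
=\bigl(\partial_t u-G_\tau u_N\bigr)+\bigl(G_\tau u_N-T_h^{-1}G_\tau u_h\bigr)
+\bigl(\nabla_g u-\nabla_g u_N\bigr)+\bigl(\nabla_g u_N-T_h^{-1}G_h u_h\bigr),
\]
up to the tangential projection in the lift. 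That projection is bounded by Assumption~\ref{ass:geometry} and only changes constants. The triangle inequality in $\mathcal H$ then reduces the proof to four terms.

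\textbf{The two temporal terms.} The terms $\partial_t u-G_\tau u_N$ and $\nabla_g(u-u_N)$ are semidiscrete-in-time errors. I would take them from the temporal PPR superconvergence theorem of~\cite{jiang2026fdm}. On a uniform grid, the stencil~\eqref{eq:PPR_stencil} is exact for quadratics, including the one-sided endpoint formulas. A Taylor expansion together with the $H^1$-type stability of the FDM Neumann problem gives $\tau^2$ times $\|u\|_{H^3(\mathcal T;L^2)}$ and $\|\nabla_g\partial_{tt}u\|$.

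The term $\nabla_g\Delta_g^{-1}\partial_{tt}f$ appears because the consistency error of $D_{tt}$ is passed through the spatial solution operator. The FDM equation at each node is $-D_{tt}u^j-\Delta_g u^j=f(t_j)+b_j$. The truncation error of order $\tau^2\partial_{tt}(\partial_{tt}u)$ is rewritten using the equation as $\tau^2\partial_{tt}(-\Delta_g u-f)$, and $\Delta_g^{-1}$ is applied. This is where the spatial mean-zero hypothesis is needed. Together these give the $\tau^2\mathcal A_3$ term.

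\textbf{The two spatial terms.} The remaining terms compare $u_N$ with the fully discrete sFEM solution at each time node, and they use the PPPR results. There are two sources of error:
\begin{enumerate}
\item \emph{Geometric and polynomial-preserving error.} Applying $G_h$ to the interpolant of $u_N$ gives an error of order $h^2$, controlled by the $W^{3,\infty}$ norm. This uses the consistency of PPPR from~\cite{dong2020parametric}.
\item \emph{Supercloseness error.} The gap between $u_h$ and $I_hT_hu_N$ in the energy seminorm is of order $h^{1+\min\{1,\sigma\}}$ on $\mathcal O(h^{2\sigma})$-irregular meshes, controlled by the $H^3$ and $W^{2,\infty}$ norms. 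Boundedness of $G_h$ in the $H^1$ seminorm then transfers this to the recovered gradient.
\end{enumerate}
For the recovered time derivative, apply the same supercloseness argument to $\partial_t u_N$. This accounts for the $\|\partial_tu_N\|_{L^2(H^2)}$ term in $\mathcal A_1$. Finally, summing over time nodes with the weights $w_j$ converts the nodewise bounds into the $L^2(\mathcal T)$ norms.

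\textbf{Main obstacle.} The hardest step is checking that the reference problem is really in the setting of~\cite{jiang2026fdm}. Their estimates are stated for mean-zero data. Here, the nonzero Neumann data $a_0,a_1$ enter through the ghost-point load $b_j$. I would first subtract a smooth lifting $w$ with $\partial_t w(0)=a_0$ and $\partial_t w(1)=a_1$, for example a quadratic-in-time polynomial. The cited results are then applied to $u-w$, with the corrected source. This requires exactly the added hypothesis that the lifting has the stated regularity. I would also check that centering the load is consistent with the mean-zero normalization on both the continuous side ($V$) and the discrete side ($V_h$).
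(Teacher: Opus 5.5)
Your outline is correct and ends where the paper's proof does: the estimate is imported from Jiang et al.~\cite{jiang2026fdm} and assembled with the triangle inequality. You work at a finer level, though.

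\textbf{The paper's route.} The paper cites only the two finished component estimates, for $\partial_tu-G_\tau u_h$ and for $\nabla_gu-G_hu_h$. These are stated for squared errors with factors $h^4$, $h^{2+2\min\{1,\sigma\}}$ and $\tau^4$. It then uses $\sqrt{a+b+c}\le\sqrt a+\sqrt b+\sqrt c$ and adds the two $\mathcal H$-components. Nothing more is needed, because applicability of those results (mesh condition, regularity, mean-zero regime, Neumann lifting) is written into the hypotheses. Your ``main obstacle'' is therefore assumed in the statement rather than something the proof must establish.

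\textbf{What your split buys.} Splitting at $u_N$ reopens the cited proofs. The gain is transparency about where each norm in $\mathcal A_1$--$\mathcal A_3$ comes from, notably $\nabla_g\Delta_g^{-1}\partial_{tt}f$ via the $D_{tt}$ truncation error. Your quadratic lifting $w=a_0t+(a_1-a_0)t^2/2$ also fits the scheme exactly, since $b_j+D_{tt}w_j=\partial_{tt}w$ at every node.

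\textbf{What it costs.} You now rely on intermediate results whose precise form must match the cited work. One attribution is also off. Supercloseness applied to $\partial_tu_N$ would give $h^{1+\min\{1,\sigma\}}$ times $H^3$/$W^{2,\infty}$ norms of $\partial_tu_N$, not $h^2\|\partial_tu_N\|_{L^2(\mathcal T;H^2(\mathcal M))}$. That term comes instead from two places: the interpolation error of the time difference, and the temporal part $\sum_j\tau(D_t(\cdot),D_tv_{h,j})$ of $\mathfrak a_h$ in the space--time error equation.

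Finally, neither the lifting nor the centering of the load makes the source spatially mean-zero at each time, which $\Delta_g^{-1}$ requires. That condition has to remain a hypothesis.
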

\begin{proof}
The temporal and spatial recovered-derivative estimates in~\cite{jiang2026fdm} bound their squared errors by sums with factors $h^4$, $h^{2+2\min\{1,\sigma\}}$, and $\tau^4$. Taking square roots and adding the component bounds gives~\eqref{equ:main_superconvergence_result}. This is a consequence of the cited recovery theory, not a new superconvergence theorem for the optimization method.
\end{proof}
The irregularity condition measures mildly structured meshes: outside a collection of triangles of total area $\mathcal O(h^{2\sigma})$, neighboring triangles satisfy the approximate-parallelogram condition used in the cited supercloseness estimate. Its chart and geometric requirements are part of that result. The bound is conditional on those requirements, not merely on shape regularity. When the displayed norms are bounded and $\sigma\ge1$, its recovery contribution is $\mathcal O(h^2+\tau^2)$.

For general projection inputs the source need not have zero spatial mean at every time. Then the inverse Laplacian in Proposition~\ref{prop:imported_recovery} may only be applied to the mean-zero component. The spatial mean of the potential instead satisfies a scalar temporal Neumann problem. If that mode, the boundary lifting, or the geometric transfer has not been covered by an applicable estimate, its error remains in $\varepsilon_{{\rm rec},h}$; formula~\eqref{equ:main_superconvergence_result} is not applied to it without justification. The perturbation results below use the definition~\eqref{eq:recovery_error_definition} and remain valid with these contributions retained.

\subsection{Projection error, data perturbation, and residuals}
Use $\|v_h\|_{a,h}=\mathfrak a_h(v_h,v_h)^{1/2}$ on $V_h$ and the corresponding dual norm $\|\cdot\|_{V_h^*}$. Introduce the explicit operator constants
\begin{equation}\label{eq:operator_constants}
\begin{split}
\kappa_h&=\sup_{0\ne v_h\in V_h}
\frac{\|\mathcal J_h\mathbb G_hv_h\|_{\mathcal H}}{\|v_h\|_{a,h}},\\
\Lambda_h&=\sup_{0\ne\mathbf v_h\in X_h}
\frac{\|\ell_h(\mathbf v_h;\cdot)-\ell_h(\mathbf0;\cdot)\|_{V_h^*}}{\|\mathbf v_h\|_h}.
\end{split}
\end{equation}
They are finite matrix-operator norms on any fixed admissible mesh. Their uniform boundedness is a separate stability requirement; polynomial preservation alone does not establish it. Define the matched-input load defect
\begin{equation}\label{eq:load_consistency}
d_h(\mathbf z)=\|\ell_h(\mathcal I_h\mathbf z;\cdot)-\ell_h^{\rm ref}(\mathbf z;\cdot)\|_{V_h^*}.
\end{equation}
It includes recovered differentiation of the input, endpoint approximation and normalization, and any difference in load transfer. This defect is distinct from derivative recovery of the potential.

Let $\widehat\varphi_h$ be an inexact solve of~\eqref{eq:Poisson-gradient_enhanced} for input $\mathbf v_h$, with residual
\[
s_h(v_h)=\mathfrak a_h(\widehat\varphi_h,v_h)-\ell_h(\mathbf v_h;v_h),
\qquad \varepsilon_{s,h}=\|s_h\|_{V_h^*}.
\]
In this display the bold $\mathbf v_h$ is the input pair and the unbold $v_h\in V_h$ is a scalar test function. To account for endpoint and tangency enforcement, set
\begin{equation}\label{eq:enforcement_error}
\varepsilon_{R,h}=\left\|\mathcal J_h\left[
\mathcal R_h(\mathbf v_h+\mathbb G_h\widehat\varphi_h)
-(\mathbf v_h+\mathbb G_h\widehat\varphi_h)\right]\right\|_{\mathcal H}.
\end{equation}
This is an explicitly defined correction norm, not an assumed higher-order term.

\begin{lemma}[Single recovered projection]\label{lem:projection_error}
Under Assumption~\ref{ass:geometry}, for smooth $\mathbf z$ and $\mathbf v_h\in X_h$, let $\widehat P_h\mathbf v_h=\mathcal R_h(\mathbf v_h+\mathbb G_h\widehat\varphi_h)$. Then
\begin{equation}\label{eq:projection_error_bound}
\begin{split}
\|P_{\mathcal C}\mathbf z-\mathcal J_h\widehat P_h\mathbf v_h\|_{\mathcal H}
\le{}& a_h\|\mathbf z-\mathcal J_h\mathbf v_h\|_{\mathcal H}
+\varepsilon_{{\rm rec},h}(\mathbf z)
+\kappa_h\bigl[d_h(\mathbf z)+\varepsilon_{s,h}\bigr]\\
&+(a_h-1)\varepsilon_{I,h}(\mathbf z)+\varepsilon_{R,h},\qquad
a_h=1+\kappa_h\Lambda_h C_{\rm inv}.
\end{split}
\end{equation}
\end{lemma}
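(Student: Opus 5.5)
The plan is a telescoping decomposition of the error through intermediate quantities, bounded one at a time by the triangle inequality. By Proposition~\ref{prop:continuous_projection}, $P_{\mathcal C}\mathbf z=\mathbf z+\mathbb G\varphi[\mathbf z]$. Let $\varphi_h^{I}$ be the exact solution of \eqref{eq:Poisson-gradient_enhanced} with input $\mathcal I_h\mathbf z$, and let $\varphi_h^{v}$ be the exact solution with input $\mathbf v_h$. The first step removes the enforcement map: by the definition \eqref{eq:enforcement_error},
\[
\|P_{\mathcal C}\mathbf z-\mathcal J_h\widehat P_h\mathbf v_h\|_{\mathcal H}\le \|\mathbf z+\mathbb G\varphi[\mathbf z]-\mathcal J_h(\mathbf v_h+\mathbb G_h\widehat\varphi_h)\|_{\mathcal H}+\varepsilon_{R,h}.
\]
Since $\mathcal J_h$ is linear, the remaining term splits as
\[
\|\mathbf z-\mathcal J_h\mathbf v_h\|_{\mathcal H}+\|\mathbb G\varphi[\mathbf z]-\mathcal J_h\mathbb G_h\varphi_h[\mathbf z]\|_{\mathcal H}+\|\mathcal J_h\mathbb G_h(\varphi_h[\mathbf z]-\widehat\varphi_h)\|_{\mathcal H}.
\]
The middle term is exactly $\varepsilon_{{\rm rec},h}(\mathbf z)$ by \eqref{eq:recovery_error_definition}.

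The last term is bounded by the definition of $\kappa_h$ in \eqref{eq:operator_constants}:
\[
\|\mathcal J_h\mathbb G_h(\varphi_h[\mathbf z]-\widehat\varphi_h)\|_{\mathcal H}\le\kappa_h\|\varphi_h[\mathbf z]-\widehat\varphi_h\|_{a,h}.
\]
Since $\mathfrak a_h$ is an inner product on $V_h$, the energy norm of a solution difference equals the dual norm of the load difference (the usual Riesz isometry, valid with the centered loads). Splitting
\[
\varphi_h[\mathbf z]-\widehat\varphi_h=(\varphi_h[\mathbf z]-\varphi_h^{I})+(\varphi_h^{I}-\varphi_h^{v})+(\varphi_h^{v}-\widehat\varphi_h)
\]
gives three pieces:
\begin{itemize}
\item the first is bounded by $d_h(\mathbf z)$, from \eqref{eq:load_consistency};
\item the third is bounded by $\varepsilon_{s,h}$, since $\mathfrak a_h(\widehat\varphi_h-\varphi_h^{v},\cdot)=s_h$;
\item for the second, $\ell_h$ is affine, so $\ell_h(\mathcal I_h\mathbf z;\cdot)-\ell_h(\mathbf v_h;\cdot)=\ell_h(\mathcal I_h\mathbf z-\mathbf v_h;\cdot)-\ell_h(\mathbf 0;\cdot)$, and hence it is bounded by $\Lambda_h\|\mathcal I_h\mathbf z-\mathbf v_h\|_h$.
\end{itemize}
Both $\mathcal I_h\mathbf z$ and $\mathbf v_h$ lie in $X_h$, so Assumption~\ref{ass:geometry} gives
\[
\|\mathcal I_h\mathbf z-\mathbf v_h\|_h\le C_{\rm inv}\|\mathcal J_h\mathcal I_h\mathbf z-\mathcal J_h\mathbf v_h\|_{\mathcal H}\le C_{\rm inv}\bigl(\varepsilon_{I,h}(\mathbf z)+\|\mathbf z-\mathcal J_h\mathbf v_h\|_{\mathcal H}\bigr).
\]

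Collecting terms, the coefficient of $\|\mathbf z-\mathcal J_h\mathbf v_h\|_{\mathcal H}$ is $1+\kappa_h\Lambda_hC_{\rm inv}=a_h$. The coefficient of $\varepsilon_{I,h}$ is $\kappa_h\Lambda_hC_{\rm inv}=a_h-1$. The remaining terms are $\varepsilon_{{\rm rec},h}+\kappa_h(d_h+\varepsilon_{s,h})+\varepsilon_{R,h}$. This is exactly \eqref{eq:projection_error_bound}.

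The main obstacle is bookkeeping rather than analysis. First, $\varphi_h[\mathbf z]$, $\varphi_h^{I}$ and $\widehat\varphi_h$ must all be compared in the same centered space $V_h$. This needs the reference load restricted to $V_h$, so that the dual norms in $d_h$ and $\varepsilon_{s,h}$ measure exactly the energy-norm differences. Second, $\varepsilon_{I,h}$ must enter only through the $\Lambda_h$ term, and not also through the leading $\mathbf z-\mathcal J_h\mathbf v_h$ term. I will keep the raw input difference unsplit in that term so the coefficient $a_h-1$, rather than $a_h$, appears.
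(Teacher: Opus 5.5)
Your proposal is correct and follows essentially the same argument as the paper. The paper subtracts the equations for $\widehat\varphi_h$ and $\varphi_h[\mathbf z]$ and tests with the difference to obtain $\|\widehat\varphi_h-\varphi_h[\mathbf z]\|_{a,h}\le\Lambda_h\|\mathbf v_h-\mathcal I_h\mathbf z\|_h+d_h(\mathbf z)+\varepsilon_{s,h}$; your splitting through the auxiliary exact solutions $\varphi_h^{I},\varphi_h^{v}$ and the Riesz isometry reproduces this term by term, and the remaining steps ($C_{\rm inv}$ with $\varepsilon_{I,h}$, inserting the reference recovered derivative to produce $\varepsilon_{{\rm rec},h}$, the $\kappa_h$ bound, peeling off $\varepsilon_{R,h}$, and the resulting coefficients $a_h$ and $a_h-1$) coincide with the paper's.
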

\begin{proof}
Subtract the equations for $\widehat\varphi_h$ and $\varphi_h[\mathbf z]$. Testing the difference with itself gives
\[
\|\widehat\varphi_h-\varphi_h[\mathbf z]\|_{a,h}
\le \Lambda_h\|\mathbf v_h-\mathcal I_h\mathbf z\|_h+d_h(\mathbf z)+\varepsilon_{s,h}.
\]
The affine endpoint part cancels when taking the difference of two recovered loads. Moreover,
\[
\|\mathbf v_h-\mathcal I_h\mathbf z\|_h
\le C_{\rm inv}\bigl(\|\mathcal J_h\mathbf v_h-\mathbf z\|_{\mathcal H}+\varepsilon_{I,h}(\mathbf z)\bigr).
\]
Use $P_{\mathcal C}\mathbf z=\mathbf z+\mathbb G\varphi[\mathbf z]$, insert the reference recovered derivative, and apply~\eqref{eq:operator_constants}. The final enforcement changes the lifted output by exactly the term in~\eqref{eq:enforcement_error}. The triangle inequality yields~\eqref{eq:projection_error_bound}.
\end{proof}

There is also an algebraic residual identity which does not require smooth input. Choose coordinates on $V_h$ so that its positive definite Poisson matrix is $\mathscr L_h$. Write the load vector as $D_h\mathbf z_h+\mathbf c_h$ and the recovered-gradient matrix as $\mathscr G_h$. Here $\mathbf c_h$ is the fixed affine part of the load, not the scalar compatibility defect $c_h(\mathbf z_h)$. For an exact scalar solve define
\[
U_h=I+\mathscr G_h\mathscr L_h^{-1}D_h,\qquad
\mathbf q_h=\mathscr G_h\mathscr L_h^{-1}\mathbf c_h,
\quad \mathcal R_h\mathbf w=R_h\mathbf w+\mathbf r_h.
\]
Then $\widetilde P_h\mathbf z_h=H_h\mathbf z_h+\mathbf k_h$, with $H_h=R_hU_h$ and $\mathbf k_h=R_h\mathbf q_h+\mathbf r_h$.
\begin{proposition}[Residual and idempotence defects]\label{prop:residual_identity}
For the specified weak constraint matrix $A_h$,
\begin{align}
A_h\widetilde P_h\mathbf z_h-\mathbf b_h
&=A_hH_h\mathbf z_h+A_h\mathbf k_h-\mathbf b_h,\label{eq:constraint_defect_identity}\\
\widetilde P_h(\widetilde P_h\mathbf z_h)-\widetilde P_h\mathbf z_h
&=(H_h^2-H_h)\mathbf z_h+H_h\mathbf k_h.\label{eq:idempotence_defect}
\end{align}
Thus feasibility of every output requires $A_hH_h=0$ and $A_h\mathbf k_h=\mathbf b_h$. Idempotence requires $H_h^2=H_h$ and $H_h\mathbf k_h=0$. These conditions alone do not prove orthogonality in $W_h$.
\end{proposition}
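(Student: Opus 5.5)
The plan is to work entirely with the affine representation $\widetilde P_h\mathbf z_h=H_h\mathbf z_h+\mathbf k_h$ derived just before the statement, and to obtain both identities by direct substitution.

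\textbf{Step 1: the representation itself.} For an exact scalar solve on $V_h$, the coefficient vector of $\varphi_h$ is $\mathscr L_h^{-1}(D_h\mathbf z_h+\mathbf c_h)$. Hence
\[
\mathbf z_h+\mathbb G_h\varphi_h=U_h\mathbf z_h+\mathbf q_h .
\]
Applying the affine enforcement $\mathcal R_h\mathbf w=R_h\mathbf w+\mathbf r_h$ gives
\[
\widetilde P_h\mathbf z_h=R_hU_h\mathbf z_h+R_h\mathbf q_h+\mathbf r_h=H_h\mathbf z_h+\mathbf k_h .
\]
Two facts must be checked here. First, the endpoint reset and the nodal projections $\Pi_i$ are affine, respectively linear, maps on the coefficient vector. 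Second, the centering in~\eqref{eq:compatibility_defect} is absorbed into the restriction of the load to $V_h$, so it is already encoded in $D_h$ and $\mathbf c_h$.

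\textbf{Step 2: identity~\eqref{eq:constraint_defect_identity}.} Apply $A_h$ to the representation and subtract $\mathbf b_h$. The identity follows by linearity of $A_h$.

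\textbf{Step 3: identity~\eqref{eq:idempotence_defect}.} Substitute the representation into itself:
\[
\widetilde P_h(\widetilde P_h\mathbf z_h)=H_h(H_h\mathbf z_h+\mathbf k_h)+\mathbf k_h .
\]
Subtracting $H_h\mathbf z_h+\mathbf k_h$ gives $(H_h^2-H_h)\mathbf z_h+H_h\mathbf k_h$, as claimed.

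\textbf{Step 4: the ``thus'' statements.} Both residuals are affine functions of $\mathbf z_h$ on the full coefficient space. An affine map $\mathbf z_h\mapsto T\mathbf z_h+\mathbf e$ vanishes for every $\mathbf z_h$ exactly when $T=0$ (differentiate, or take differences of two inputs) and $\mathbf e=0$ (evaluate at $\mathbf z_h=0$). This gives the two necessary conditions $A_hH_h=0$, $A_h\mathbf k_h=\mathbf b_h$, and the two idempotence conditions $H_h^2=H_h$, $H_h\mathbf k_h=0$.

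\textbf{Step 5: no orthogonality.} Feasibility and idempotence together say only that $\widetilde P_h$ is an affine projection onto a subset of $\mathcal C_h$. Orthogonality in $W_h$ would additionally require $W_hH_h$ to be symmetric, equivalently $H_h$ to be self-adjoint in $W_h$. One way to show this is not implied is a two-dimensional example of an oblique idempotent matrix. A more direct route is to note that $\mathscr G_h$ is not the $W_h$-adjoint of $-D_h$, as the paper already remarks after~\eqref{eq:PPPR_recovery_representation}. This closing remark is descriptive rather than a claim requiring heavy proof.

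The main obstacle is confirming that the domain of $\mathbf z_h$ is the full space $X_h$, so that ``for every output'' legitimately forces the linear part to vanish. The other point needing care is checking that the endpoint reset is genuinely affine in coefficient coordinates; everything else is routine linear algebra.
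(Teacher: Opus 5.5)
Your proposal is correct and follows the paper's proof: you substitute the affine representation $\widetilde P_h\mathbf z_h=H_h\mathbf z_h+\mathbf k_h$ into each left-hand side, collect the linear and constant terms, and vary $\mathbf z_h$ to get the necessary and sufficient conditions, with orthogonality needing in addition a $W_h$-self-adjoint linear part and the correct range. One small correction to Step 5: the paper only declines to \emph{assert} that the recovered divergence is the negative adjoint of $G_h$ in the optimization metric and never says it fails to be, so rely on your oblique-idempotent example rather than that remark to justify the final caveat.
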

\begin{proof}
Substitute the affine representation into each left-hand side and collect the linear and constant terms. The necessary and sufficient identities follow by varying $\mathbf z_h$. Orthogonality would additionally require the appropriate $W_h$-self-adjoint linear part and the correct range, as in~\eqref{eq:exact_discrete_projection}.
\end{proof}
This gives direct matrix checks for conservation and repeated application of the recovered map. It also explains why centering a Poisson load cannot by itself establish the projection property.

\subsection{Finite-iteration bounds for basic ISTA and FISTA}
Consider the exact continuous reference iteration and its discrete counterpart with the same prescribed parameter sequences:
\begin{equation}\label{eq:paired_iterations}
\begin{aligned}
\mathbf z^{(k+1/2)}&=\overline{\mathbf z}^{(k)}-\eta^{(k)}\nabla\mathcal Y(\overline{\mathbf z}^{(k)}),
&\mathbf z^{(k+1)}&=P_{\mathcal C}\mathbf z^{(k+1/2)},\\
\mathbf z_h^{(k+1/2)}&=\overline{\mathbf z}_h^{(k)}-\eta^{(k)}\nabla_h\mathcal Y_h(\overline{\mathbf z}_h^{(k)}),
&\mathbf z_h^{(k+1)}&=\widehat P_h\mathbf z_h^{(k+1/2)}.
\end{aligned}
\end{equation}
For ISTA the barred variables equal the current variables. For basic FISTA they use the common coefficient $\omega^{(k)}=(\tau^{(k)}-1)/\tau^{(k+1)}$. Denote
\[
\mathbf E^{(k)}=\mathbf z^{(k)}-\mathcal J_h\mathbf z_h^{(k)},\quad
e^{(k)}=\|\mathbf E^{(k)}\|_{\mathcal H},\quad
\overline e^{(k)}=\|\overline{\mathbf z}^{(k)}-\mathcal J_h\overline{\mathbf z}_h^{(k)}\|_{\mathcal H}.
\]
The same notation with $k+1/2$ is used for the intermediate pair.

\begin{assumption}[Well-defined explicit steps]\label{ass:explicit_steps}
For the finite index range considered, the reference fields and their interpolants have the regularity needed in Lemma~\ref{lem:projection_error}. The discrete extrapolated states and interpolated reference extrapolated states lie in a convex region where $\nabla_h\mathcal Y_h$ is $L_h$-Lipschitz in $\|\cdot\|_h$. All evaluated energy gradients exist. For the kinetic energy with local interactions, positive lower density bounds, upper momentum bounds, and bounded interaction second derivatives provide such a region. These are conditions on the trajectories, not properties proved here for all initial data.
\end{assumption}
Define the energy-gradient consistency error
\[
\varepsilon_{g,h}(\mathbf z)
=\|\nabla\mathcal Y(\mathbf z)-\mathcal J_h\nabla_h\mathcal Y_h(\mathcal I_h\mathbf z)\|_{\mathcal H},
\qquad L_h^\sharp=C_{\rm lift}L_hC_{\rm inv}.
\]

\begin{lemma}[Explicit half-step]\label{lem:half_step}
Under Assumptions~\ref{ass:geometry} and~\ref{ass:explicit_steps},
\begin{equation}\label{eq:gradient_descent_error_proof1}
e^{(k+1/2)}\le(1+\eta^{(k)}L_h^\sharp)\overline e^{(k)}
+\eta^{(k)}\bigl[\varepsilon_{g,h}(\overline{\mathbf z}^{(k)})
+L_h^\sharp\varepsilon_{I,h}(\overline{\mathbf z}^{(k)})\bigr].
\end{equation}
\end{lemma}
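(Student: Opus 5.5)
The plan is a triangle-inequality argument that splits the half-step error into three pieces: the reference-gradient consistency error, the Lipschitz propagation of the state error, and the interpolation error at the reference point. Write $\mathbf y=\overline{\mathbf z}^{(k)}$, $\mathbf y_h=\overline{\mathbf z}_h^{(k)}$ and $\eta=\eta^{(k)}$. By linearity of the lift $\mathcal J_h$,
\[
\mathbf z^{(k+1/2)}-\mathcal J_h\mathbf z_h^{(k+1/2)}
=(\mathbf y-\mathcal J_h\mathbf y_h)
-\eta\bigl[\nabla\mathcal Y(\mathbf y)-\mathcal J_h\nabla_h\mathcal Y_h(\mathbf y_h)\bigr].
\]
The first term has norm $\overline e^{(k)}$. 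For the bracket, insert the interpolant $\mathcal I_h\mathbf y$:
\[
\nabla\mathcal Y(\mathbf y)-\mathcal J_h\nabla_h\mathcal Y_h(\mathbf y_h)
=\bigl[\nabla\mathcal Y(\mathbf y)-\mathcal J_h\nabla_h\mathcal Y_h(\mathcal I_h\mathbf y)\bigr]
+\mathcal J_h\bigl[\nabla_h\mathcal Y_h(\mathcal I_h\mathbf y)-\nabla_h\mathcal Y_h(\mathbf y_h)\bigr].
\]
By definition, the first bracket has $\mathcal H$-norm $\varepsilon_{g,h}(\overline{\mathbf z}^{(k)})$.

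For the second bracket, first apply the lift bound of Assumption~\ref{ass:geometry}, which gives the factor $C_{\rm lift}$ and reduces to the $\|\cdot\|_h$ norm. Then use the $L_h$-Lipschitz property from Assumption~\ref{ass:explicit_steps}. Both $\mathbf y_h$ and $\mathcal I_h\mathbf y$ lie in the convex region where that property holds: the former is a discrete extrapolated state and the latter an interpolated reference extrapolated state. This gives the bound $C_{\rm lift}L_h\|\mathcal I_h\mathbf y-\mathbf y_h\|_h$. Since $\mathcal I_h\mathbf y-\mathbf y_h\in X_h$, the inverse norm equivalence together with the triangle inequality through $\mathbf y$ yields
\[
\|\mathcal I_h\mathbf y-\mathbf y_h\|_h\le C_{\rm inv}\|\mathcal J_h\mathcal I_h\mathbf y-\mathcal J_h\mathbf y_h\|_{\mathcal H}
\le C_{\rm inv}\bigl(\varepsilon_{I,h}(\mathbf y)+\overline e^{(k)}\bigr).
\]
Combining the pieces gives $\overline e^{(k)}+\eta\varepsilon_{g,h}+\eta L_h^\sharp(\overline e^{(k)}+\varepsilon_{I,h})$, which is exactly~\eqref{eq:gradient_descent_error_proof1}.

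I expect the main obstacle to be a bookkeeping point rather than the estimate itself: one must check that the difference $\mathcal I_h\mathbf y-\mathbf y_h$ really lies in $X_h$. Both fields carry momenta in the reconstructed nodal tangent planes, since the interpolant projects onto these planes by definition and the iterates are in $X_h$. This membership is what licenses $C_{\rm inv}$. It also matters that the gradient $\nabla_h\mathcal Y_h$ is taken in the $W_h$-metric so that the Lipschitz constant is measured in $\|\cdot\|_h$, and the tangential component of the momentum gradient must likewise be understood within $X_h$, which I would state explicitly.
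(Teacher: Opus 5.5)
Your proposal is correct and follows essentially the same route as the paper's proof. You subtract the half-step equations, insert $\nabla_h\mathcal Y_h(\mathcal I_h\overline{\mathbf z}^{(k)})$, bound the consistency part by $\varepsilon_{g,h}$, and bound the remainder by $C_{\rm lift}L_h$, then $C_{\rm inv}$, then the triangle inequality through $\overline{\mathbf z}^{(k)}$; your check that $\mathcal I_h\overline{\mathbf z}^{(k)}-\overline{\mathbf z}_h^{(k)}\in X_h$ makes explicit a membership the paper leaves implicit.
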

\begin{proof}
Subtract the two half-step equations in~\eqref{eq:paired_iterations}. The initial term is the error at the $k$th extrapolated point. Insert $\nabla_h\mathcal Y_h(\mathcal I_h\overline{\mathbf z}^{(k)})$ into the gradient difference. Its consistency term is $\varepsilon_{g,h}$, while the remaining term is bounded by
\[
C_{\rm lift}L_h\|\overline{\mathbf z}_h^{(k)}-\mathcal I_h\overline{\mathbf z}^{(k)}\|_h
\le L_h^\sharp\bigl(\overline e^{(k)}+\varepsilon_{I,h}(\overline{\mathbf z}^{(k)})\bigr).
\]
The triangle inequality proves the claim. No discrete consistency estimate is inferred solely from continuous Lipschitz continuity.
\end{proof}

\begin{theorem}[Conditional finite-iteration estimate]\label{thm:finite_iterations}
Fix $K\ge1$ and suppose Assumptions~\ref{ass:geometry} and~\ref{ass:explicit_steps} hold through the corresponding half-steps. Let
\begin{equation}\label{eq:iteration_remainder}
\begin{split}
\delta_h^{(k)}={}&a_h\eta^{(k)}
\bigl[\varepsilon_{g,h}(\overline{\mathbf z}^{(k)})+L_h^\sharp\varepsilon_{I,h}(\overline{\mathbf z}^{(k)})\bigr]
+\varepsilon_{{\rm rec},h}(\mathbf z^{(k+1/2)})\\
&+\kappa_h\bigl[d_h(\mathbf z^{(k+1/2)})+\varepsilon_{s,h}^{(k)}\bigr]
+(a_h-1)\varepsilon_{I,h}(\mathbf z^{(k+1/2)})+\varepsilon_{R,h}^{(k)}.
\end{split}
\end{equation}
Then
\begin{equation}\label{eq:one_step_recurrence}
e^{(k+1)}\le a_h(1+\eta^{(k)}L_h^\sharp)\overline e^{(k)}+\delta_h^{(k)}.
\end{equation}
For basic FISTA let $\Omega=\max_{0\le k<K}|\omega^{(k)}|$, and for ISTA set $\Omega=0$. If both reference and discrete initial extrapolated points equal their initial iterates, then
\begin{equation}\label{eq:finite_iteration_bound}
\max_{0\le k\le K}e^{(k)}
\le M_h^K e^{(0)}+\sum_{k=0}^{K-1}M_h^{K-1-k}\delta_h^{(k)},\qquad
M_h=\max\left\{1,(1+2\Omega)\max_{k<K}a_h(1+\eta^{(k)}L_h^\sharp)\right\}.
\end{equation}
\end{theorem}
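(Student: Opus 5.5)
The plan is to obtain the one-step recurrence~\eqref{eq:one_step_recurrence} by chaining Lemma~\ref{lem:projection_error} with Lemma~\ref{lem:half_step}. The finite bound~\eqref{eq:finite_iteration_bound} then follows from a discrete Gronwall-type induction on running maxima. The extrapolation step is absorbed into the factor $1+2\Omega$.

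\textbf{One-step recurrence.} Fix $k<K$. I apply Lemma~\ref{lem:projection_error} with smooth input $\mathbf z=\mathbf z^{(k+1/2)}$ and discrete input $\mathbf v_h=\mathbf z_h^{(k+1/2)}$. This is legitimate because $\mathbf z^{(k+1)}=P_{\mathcal C}\mathbf z^{(k+1/2)}$ and $\mathbf z_h^{(k+1)}=\widehat P_h\mathbf z_h^{(k+1/2)}$ by~\eqref{eq:paired_iterations}, and Assumption~\ref{ass:explicit_steps} supplies the regularity of the reference half-step. The $k$th solve residual and enforcement correction are denoted $\varepsilon_{s,h}^{(k)},\varepsilon_{R,h}^{(k)}$. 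The lemma gives
\[
e^{(k+1)}\le a_h e^{(k+1/2)}+\varepsilon_{{\rm rec},h}(\mathbf z^{(k+1/2)})+\kappa_h\bigl[d_h(\mathbf z^{(k+1/2)})+\varepsilon_{s,h}^{(k)}\bigr]+(a_h-1)\varepsilon_{I,h}(\mathbf z^{(k+1/2)})+\varepsilon_{R,h}^{(k)}.
\]
Substituting the half-step bound~\eqref{eq:gradient_descent_error_proof1} for $e^{(k+1/2)}$, and multiplying it by $a_h\ge1$, produces exactly $a_h(1+\eta^{(k)}L_h^\sharp)\overline e^{(k)}$ plus the remainder $\delta_h^{(k)}$ of~\eqref{eq:iteration_remainder}. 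This step is purely bookkeeping.

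\textbf{Extrapolation.} For basic FISTA both sequences use the same prescribed coefficients. Hence
\[
\overline{\mathbf z}^{(k)}=\mathbf z^{(k)}+\omega^{(k-1)}(\mathbf z^{(k)}-\mathbf z^{(k-1)}),
\qquad
\overline{\mathbf z}_h^{(k)}=\mathbf z_h^{(k)}+\omega^{(k-1)}(\mathbf z_h^{(k)}-\mathbf z_h^{(k-1)}),
\]
with the convention $\overline{\mathbf z}^{(0)}=\mathbf z^{(0)}$ for the first step. Since $\mathcal J_h$ is linear (scalar composition with $p_h$ followed by a pointwise tangential projection), the errors combine linearly:
\[
\overline e^{(k)}\le(1+|\omega^{(k-1)}|)e^{(k)}+|\omega^{(k-1)}|e^{(k-1)}\le(1+2\Omega)\max\{e^{(k)},e^{(k-1)}\}.
\]
For ISTA, and for $k=0$, we simply have $\overline e^{(k)}=e^{(k)}$. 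Only the coefficients $\omega^{(k-1)}$ with $1\le k\le K-1$ enter, and these are covered by $\Omega$.

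\textbf{Induction.} Set $E_k=\max_{0\le j\le k}e^{(j)}$. Combining the previous two steps gives $e^{(k+1)}\le M_hE_k+\delta_h^{(k)}$. Since $M_h\ge1$ and $\delta_h^{(k)}\ge0$, this implies $E_{k+1}\le M_hE_k+\delta_h^{(k)}$. Starting from $E_0=e^{(0)}$, induction on $k$ yields $E_K\le M_h^Ke^{(0)}+\sum_{k<K}M_h^{K-1-k}\delta_h^{(k)}$, which is~\eqref{eq:finite_iteration_bound}.

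\textbf{Main obstacle.} The delicate point is not the algebra but confirming that the hypotheses of the two lemmas hold along the whole trajectory. Specifically, the extrapolated states for both sequences must stay in the Lipschitz region of Assumption~\ref{ass:explicit_steps}, and the reference half-steps must be smooth enough for $\varepsilon_{{\rm rec},h}$ and $d_h$ to be defined. The statement assumes both conditions rather than proving them, so the argument relies on them through index $K-1$.

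The same-parameter requirement also matters. Backtracking, restarts, or MFISTA selection could make $\eta^{(k)}$ or $\omega^{(k)}$ differ between the two sequences. That would break the linear combination used in the extrapolation step, so these variants must be excluded, consistent with the word ``basic'' in the theorem.
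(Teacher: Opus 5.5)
Your proposal is correct and follows the paper's proof essentially step for step. You compose Lemma~\ref{lem:projection_error} with Lemma~\ref{lem:half_step} to get the one-step recurrence, use linearity of $\mathcal J_h$ to obtain $\overline e^{(k)}\le(1+|\omega^{(k-1)}|)e^{(k)}+|\omega^{(k-1)}|e^{(k-1)}$, and close with the same running-maximum induction (using $M_h\ge1$ and $\delta_h^{(k)}\ge0$), differing only by an immaterial index shift in the extrapolation identity.
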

\begin{proof}
Apply Lemma~\ref{lem:projection_error} to $\mathbf z^{(k+1/2)}$ and $\mathbf z_h^{(k+1/2)}$, then use Lemma~\ref{lem:half_step}. This proves~\eqref{eq:one_step_recurrence}. For FISTA, linearity of the lift gives the exact identity
\[
\overline{\mathbf E}^{(k+1)}=(1+\omega^{(k)})\mathbf E^{(k+1)}-\omega^{(k)}\mathbf E^{(k)}.
\]
Thus $\overline e^{(k+1)}\le(1+|\omega^{(k)}|)e^{(k+1)}+|\omega^{(k)}|e^{(k)}$. Let $F_k=\max_{0\le j\le k}e^{(j)}$. The initialization and this inequality imply $F_{k+1}\le M_hF_k+\delta_h^{(k)}$; this also holds for ISTA. Iterating the scalar inequality gives~\eqref{eq:finite_iteration_bound}.
\end{proof}

When Proposition~\ref{prop:imported_recovery} applies with uniformly bounded regularity factors, its contribution to~\eqref{eq:iteration_remainder} is $\mathcal O(h^{1+\min\{1,\sigma\}}+\tau^2)$. The remaining terms stay explicit. A mesh-independent finite-$K$ rate additionally requires bounds for $\kappa_h,\Lambda_h,L_h^\sharp$, input-load and energy consistency, endpoint enforcement, and solver errors. The theorem does not establish those bounds by assuming that the final iterate is already second order. Nor is $M_h^K$ uniform as $K\to\infty$.

The theorem concerns prescribed common step and extrapolation sequences. It can be used for a fixed realized sequence of accepted parameters, with a reference iteration driven by that same sequence. Independently adapting the continuous and discrete algorithms may lead to different restarts or MFISTA selections and requires additional branch-sensitive analysis. Finally, discretization and optimization errors are distinct: if $\mathbf z^\star$ is a minimizer,
\[
\|\mathcal J_h\mathbf z_h^{(K)}-\mathbf z^\star\|_{\mathcal H}
\le e^{(K)}+\|\mathbf z^{(K)}-\mathbf z^\star\|_{\mathcal H}.
\]
The second term is not bounded by the derivative-recovery theorem.

\subsection{Douglas--Rachford perturbations}
Let $P=P_{\mathcal C}$ and $Q_\gamma=\operatorname{Prox}_{\gamma\mathcal Y}$ on $\mathcal H$. Assume $\mathcal Y$ is proper, lower semicontinuous, and convex, so its proximal map is firmly nonexpansive. For fixed $\gamma>0$ and $0<\alpha<2$, the exact auxiliary-variable map is
\[
\mathcal D\mathbf z=\mathbf z+\alpha[Q_\gamma(2P\mathbf z-\mathbf z)-P\mathbf z]
=\left(1-\frac\alpha2\right)\mathbf z+\frac\alpha2(2Q_\gamma-I)(2P-I)\mathbf z.
\]
The reflected proximal maps are nonexpansive, and therefore so is $\mathcal D$. This statement uses exact $P$ and $Q_\gamma$.

For Algorithm~\ref{alg:GE_DR}, include scalar-solver errors in $\widehat P_h$ and any local proximal-solver errors in $\widehat Q_{\gamma,h}$. At step $k$ define
\begin{align*}
\pi_h^{(k)}&=\|\mathcal J_h\widehat P_h\overline{\mathbf z}_h^{(k)}-P(\mathcal J_h\overline{\mathbf z}_h^{(k)})\|_{\mathcal H},\\
\mathbf v_h^{(k)}&=2\mathbf z_h^{(k)}-\overline{\mathbf z}_h^{(k)},\qquad
q_h^{(k)}=\|\mathcal J_h\widehat Q_{\gamma,h}\mathbf v_h^{(k)}-Q_\gamma(\mathcal J_h\mathbf v_h^{(k)})\|_{\mathcal H}.
\end{align*}
These are total lifted map defects. Lemma~\ref{lem:projection_error} can resolve the projection defect when its input has the required smoothness; it is not used to impose that smoothness on an arbitrary discrete auxiliary variable.

\begin{theorem}[Finite-step DR perturbation bound]\label{thm:DR_perturbation}
For exact and discrete auxiliary sequences with the same $\gamma,\alpha$, set $\overline e^{(k)}=\|\overline{\mathbf z}^{(k)}-\mathcal J_h\overline{\mathbf z}_h^{(k)}\|_{\mathcal H}$. Then
\begin{equation}\label{eq:DR_error_bound}
\overline e^{(K)}\le\overline e^{(0)}+\alpha\sum_{k=0}^{K-1}(3\pi_h^{(k)}+q_h^{(k)}),\qquad
\|\mathbf z^{(K)}-\mathcal J_h\mathbf z_h^{(K)}\|_{\mathcal H}\le\overline e^{(K)}+\pi_h^{(K)}.
\end{equation}
\end{theorem}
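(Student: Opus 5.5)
The plan is to compare one exact step with one discrete step at the level of the lifted auxiliary variables, using only two things: nonexpansiveness of the exact maps $\mathcal D$, $P$, and $Q_\gamma$, and linearity of the lift $\mathcal J_h$. In the exact sequence $\overline{\mathbf z}^{(k+1)}=\mathcal D\overline{\mathbf z}^{(k)}$ and $\mathbf z^{(k)}=P\overline{\mathbf z}^{(k)}$. This is consistent with the initialization in \eqref{eq:DR_iteration_general} and with the form of $\mathcal D$ written above.

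\textbf{Step 1 (insertion).} Write $\mathbf y^{(k)}=\mathcal J_h\overline{\mathbf z}_h^{(k)}$ for brevity and insert $\mathcal D\mathbf y^{(k)}$:
\[
\overline e^{(k+1)}\le\|\mathcal D\overline{\mathbf z}^{(k)}-\mathcal D\mathbf y^{(k)}\|_{\mathcal H}
+\|\mathcal D\mathbf y^{(k)}-\mathcal J_h\overline{\mathbf z}_h^{(k+1)}\|_{\mathcal H}.
\]
Nonexpansiveness of $\mathcal D$ bounds the first term by $\overline e^{(k)}$.

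\textbf{Step 2 (local consistency defect).} By linearity of $\mathcal J_h$, the lifted discrete update is
\[
\mathcal J_h\overline{\mathbf z}_h^{(k+1)}=\mathbf y^{(k)}+\alpha\bigl[\mathcal J_h\widehat Q_{\gamma,h}\mathbf v_h^{(k)}-\mathcal J_h\mathbf z_h^{(k)}\bigr],
\qquad \mathbf z_h^{(k)}=\widehat P_h\overline{\mathbf z}_h^{(k)}.
\]
The exact step is $\mathcal D\mathbf y^{(k)}=\mathbf y^{(k)}+\alpha[Q_\gamma(2P\mathbf y^{(k)}-\mathbf y^{(k)})-P\mathbf y^{(k)}]$. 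The $\mathbf y^{(k)}$ terms cancel, so the defect is $\alpha$ times the difference of the bracketed terms. I split that difference into three pieces:
\begin{itemize}
\item $P\mathbf y^{(k)}-\mathcal J_h\mathbf z_h^{(k)}$, whose norm is exactly $\pi_h^{(k)}$;
\item $\mathcal J_h\widehat Q_{\gamma,h}\mathbf v_h^{(k)}-Q_\gamma(\mathcal J_h\mathbf v_h^{(k)})$, whose norm is $q_h^{(k)}$;
\item $Q_\gamma(\mathcal J_h\mathbf v_h^{(k)})-Q_\gamma(2P\mathbf y^{(k)}-\mathbf y^{(k)})$.
\end{itemize}
For the third piece, nonexpansiveness of $Q_\gamma$ together with $\mathcal J_h\mathbf v_h^{(k)}=2\mathcal J_h\mathbf z_h^{(k)}-\mathbf y^{(k)}$ gives the bound $2\|\mathcal J_h\mathbf z_h^{(k)}-P\mathbf y^{(k)}\|_{\mathcal H}=2\pi_h^{(k)}$. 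Summing, the defect is at most $\alpha(3\pi_h^{(k)}+q_h^{(k)})$. Telescoping over $k=0,\dots,K-1$ then yields the first inequality in \eqref{eq:DR_error_bound}.

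\textbf{Step 3 (projected output).} Write
\[
\mathbf z^{(K)}-\mathcal J_h\mathbf z_h^{(K)}=\bigl[P\overline{\mathbf z}^{(K)}-P\mathbf y^{(K)}\bigr]+\bigl[P\mathbf y^{(K)}-\mathcal J_h\widehat P_h\overline{\mathbf z}_h^{(K)}\bigr].
\]
Nonexpansiveness of $P$ (Proposition~\ref{prop:continuous_projection}) bounds the first bracket by $\overline e^{(K)}$. The second bracket has norm $\pi_h^{(K)}$ by definition, which gives the second inequality.

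I expect the main obstacle to be bookkeeping rather than analysis. The argument needs three things to hold:
\begin{itemize}
\item $\mathcal J_h$ must be linear on the ambient pair space, so that the reflection $2\mathbf z_h-\overline{\mathbf z}_h$ lifts to the corresponding reflection;
\item the defects $\pi_h^{(k)}$ and $q_h^{(k)}$ must be evaluated at the lifted discrete arguments and not at the exact ones, since only then can they be inserted without any smoothness hypothesis;
\item the nonexpansiveness of $\mathcal D$ must be used only for the exact maps, never for $\widehat P_h$.
\end{itemize}
I would verify explicitly that no property of $\widehat P_h$, such as firm nonexpansiveness or idempotence, is invoked anywhere, in line with Proposition~\ref{prop:residual_identity}.
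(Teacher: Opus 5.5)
Your proposal is correct and follows essentially the same argument as the paper. You insert $\mathcal D$ applied to the lifted discrete auxiliary variable, and you bound the one-step defect by $\alpha(3\pi_h^{(k)}+q_h^{(k)})$ through the same three-piece split, with nonexpansiveness of $Q_\gamma$ absorbing the $2\pi_h^{(k)}$ reflected-argument discrepancy; you then telescope using nonexpansiveness of the exact $\mathcal D$ and finish with nonexpansiveness of $P$ and the definition of $\pi_h^{(K)}$, using only linearity of $\mathcal J_h$ and never any property of $\widehat P_h$.
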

\begin{proof}
Fix a lifted auxiliary input $\mathbf w=\mathcal J_h\overline{\mathbf z}_h^{(k)}$. The difference between the lifted discrete projected point and $P\mathbf w$ is at most $\pi_h^{(k)}$, so the reflected arguments differ by at most $2\pi_h^{(k)}$. Nonexpansiveness of $Q_\gamma$ and the definition of $q_h^{(k)}$ bound the energy-proximal output discrepancy by $2\pi_h^{(k)}+q_h^{(k)}$. The subtraction of the projected point contributes one more $\pi_h^{(k)}$. Consequently the lifted discrete auxiliary update differs from $\mathcal D\mathbf w$ by at most $\alpha(3\pi_h^{(k)}+q_h^{(k)})$. Nonexpansiveness of $\mathcal D$ gives the one-step error bound, which sums to the first assertion. The second follows from nonexpansiveness of $P$ and the definition of $\pi_h^{(K)}$.
\end{proof}
This bound makes accumulated projection and local-proximal defects explicit. It does not prove that these defects decrease or are summable on a fixed mesh. Results for inexact proximal-gradient methods impose specific error criteria and decay conditions~\cite{schmidt2011inexact}; their objective-based approximate-proximal conditions cannot simply be imposed on an infeasible point for an indicator functional, whose value there is infinite. Our finite-iteration estimates consequently do not assert global convergence or an unconditional accelerated rate for the recovered algorithms.

\FloatBarrier
\Needspace{10\baselineskip}
\section{Numerical Experiments}\label{sec:numerics}
This section retains the available density-evolution figures and refinement values. They provide illustrations of the intended computations; the original solver configurations and raw error data are not available with the manuscript. In particular, these records do not independently validate the corrected boundary treatment, safeguards, or perturbation bounds above. We restrict the discussion to the displayed behavior and recorded values, and distinguish it from a reproducible convergence or timing study.

\subsection{Gradient enhanced ISTA}
We investigate the performance of the gradient-enhanced ISTA method in Algorithm~\ref{alg:GradientEnhanced_ISTA} for solving dynamic optimal transport on the sphere. The initial distribution at $t=0$ and the terminal distribution at $t=1$ are chosen as spherical Gaussian distributions with centers
$$\mu_0=(-1,-1,0.5), \qquad \mu_1=(0.6,-1,0.5),$$
with the reported Gaussian parameter $\sigma=0.1$. As shown in Figure~\ref{fig:gradient_enhanced_ISTA}, the recorded results use three different stepsizes $\eta$. Their density patterns differ, especially for $\eta=1.8\times10^{-4}$. These pictures motivate careful stepsize selection and residual monitoring, but do not alone identify the source of the discrepancy or quantify an accuracy loss.
\begin{figure}[H]
    \centering
    \begin{subfigure}{\linewidth}
        \centering
        \includegraphics[width=\linewidth]{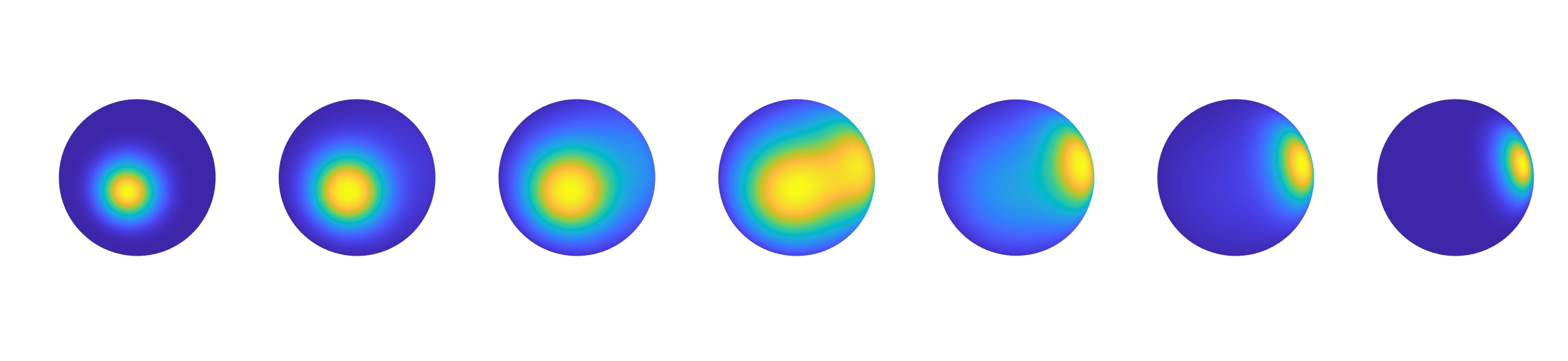}
        \caption{Stepsize $\eta=1\times10^{-4}$.}
        \label{fig:PGDstep1e-4}
    \end{subfigure}
    \begin{subfigure}{\linewidth}
        \centering
        \includegraphics[width=\linewidth]{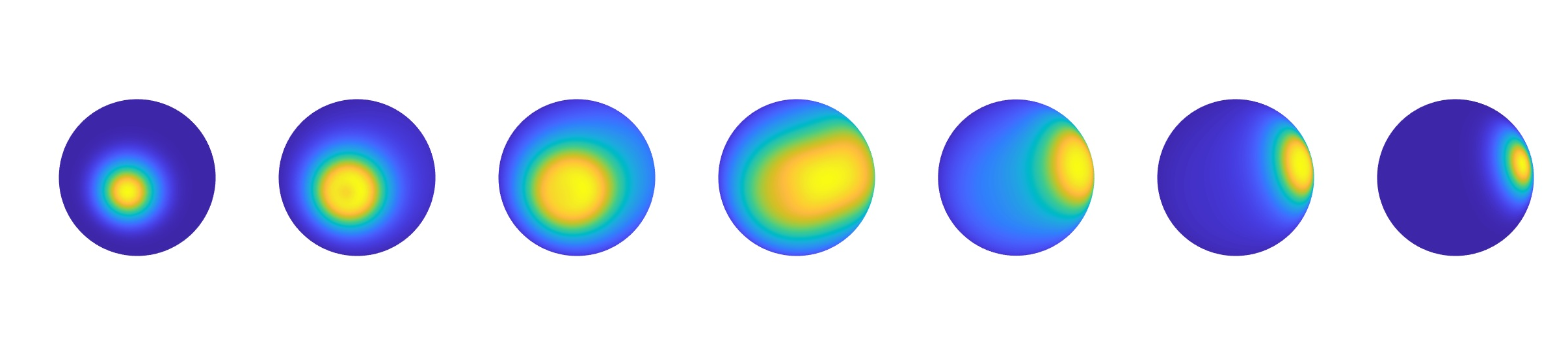}
        \caption{Stepsize $\eta=1.8\times10^{-4}$.}
        \label{fig:PGDstep1.8e-4}
    \end{subfigure}
    \begin{subfigure}{\linewidth}
        \centering
        \includegraphics[width=\linewidth]{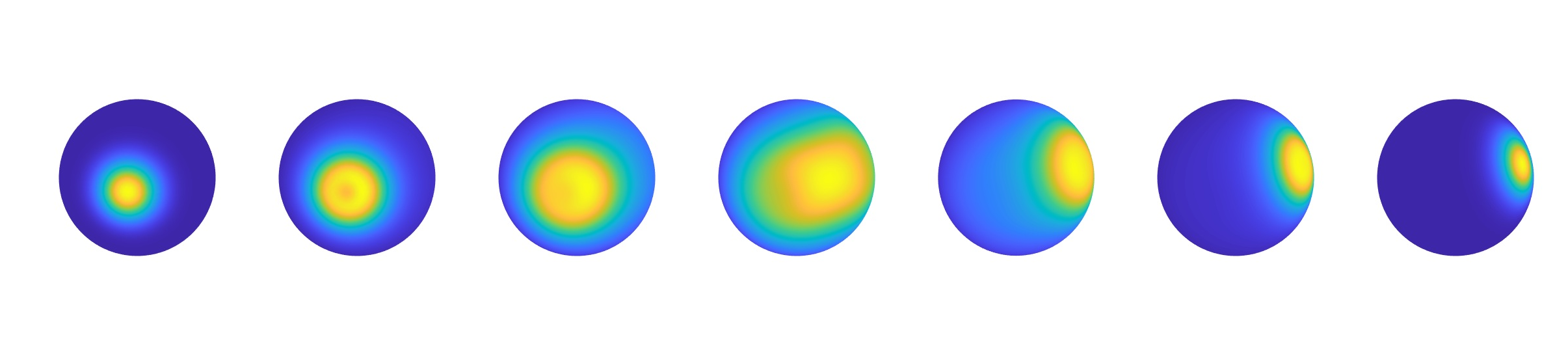}
        \caption{Stepsize $\eta=2\times10^{-4}$.}
        \label{fig:PGDstep2e-4}
    \end{subfigure}
    \caption{Solving dynamic OT on sphere using gradient enhanced ISTA with different stepsizes.}
    \label{fig:gradient_enhanced_ISTA}
\end{figure}

\subsection{FISTA}
We also investigate the performance of the gradient-enhanced FISTA method in Algorithm~\ref{alg:Gradient_enhanced_FISTA} for solving dynamic optimal transport on the sphere. The initial distribution at $t=0$ and the terminal distribution at $t=1$ are chosen as spherical Gaussian distributions with centers
$$\mu_0=(0,0,1), \qquad \mu_1=(0,0,-1),$$
with the reported Gaussian parameter $\sigma=0.1$. As shown in Figure~\ref{fig:gradient_enhanced_FISTA}, we compare the numerical results obtained with two different stepsizes $\eta$. The two displayed density sequences appear qualitatively similar. The record associates these calculations with backtracking, but does not provide the accepted stepsizes or backtracking history. Since the endpoints also differ from the preceding ISTA example, these figures are not a controlled comparison of acceleration or stepsize independence.
\begin{figure}[H]
    \centering
    \begin{subfigure}{\linewidth}
        \centering
        \includegraphics[width=\linewidth]{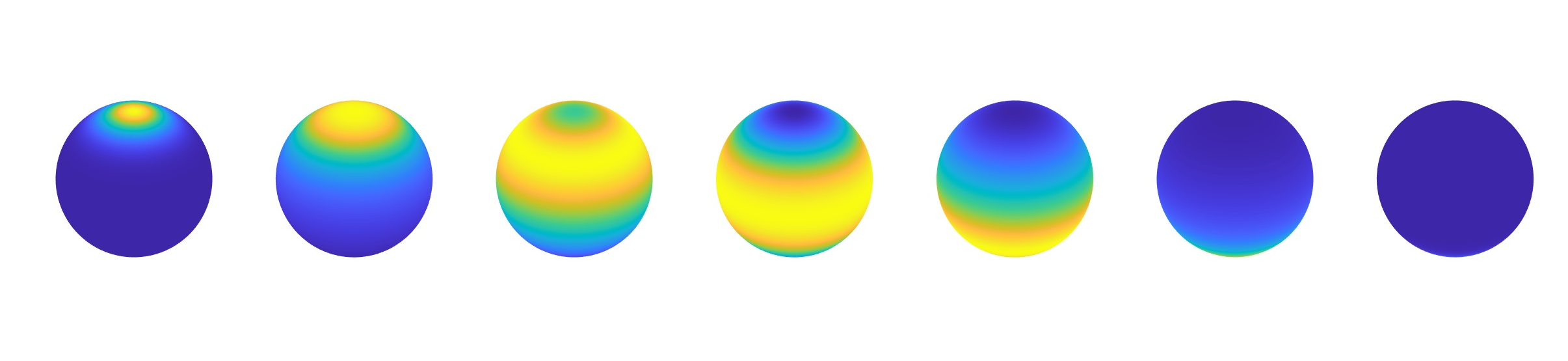}
        \caption{Stepsize $\eta=5e-4$.}
        \label{fig:FISTAstep1e-4}
    \end{subfigure}
    \begin{subfigure}{\linewidth}
        \centering
        \includegraphics[width=\linewidth]{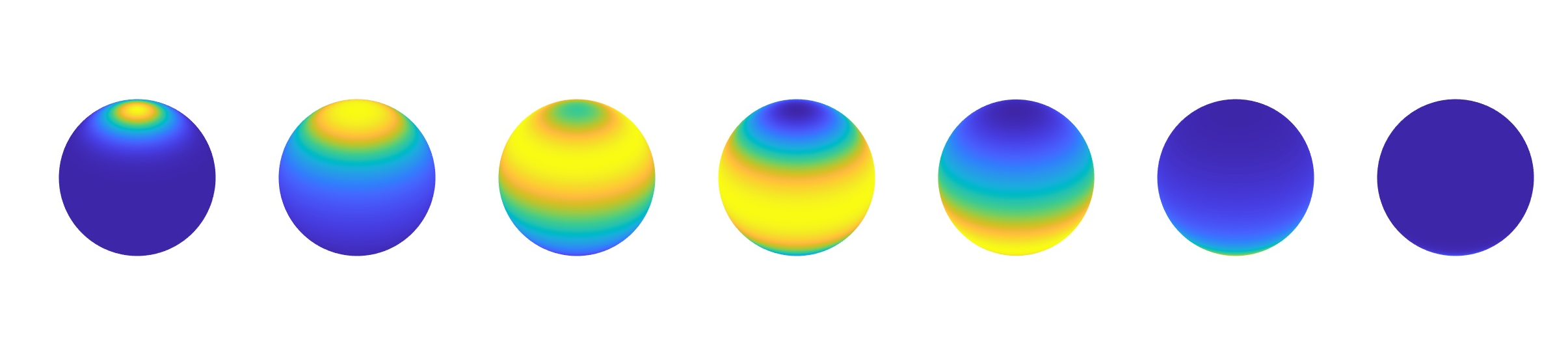}
        \caption{stepsize $\eta=2e-3$.}
        \label{fig:fista_step_2e-3}
    \end{subfigure}
    \caption{Solving dynamic OT on sphere using gradient enhanced FISTA with different stepsizes.}
    \label{fig:gradient_enhanced_FISTA}
\end{figure}

\subsection{Mean Field Planning}
Figure~\ref{fig:gradient_enhanced_FISTA_MFP} compares the recorded spherical density sequences for four local interaction integrands $F_E$: zero, $\rho^2/2$, $\rho\log\rho$, and $1/\rho$. With a fixed mass, convex density penalties can influence concentration and the occupation of low-density regions. The entropy integrand has the continuous extension $0\log0=0$, although its derivative is singular at zero; the reciprocal integrand has value $+\infty$ at zero. Consequently, their explicit-gradient evaluations require different domain checks from the zero-interaction OT proximal step. The pictures illustrate differences between the recorded sequences, without determining the relative contributions of the cost, its weight, and the numerical parameters.

\begin{figure}[H]
    \centering
    \begin{subfigure}{\linewidth}
        \centering
        \includegraphics[width=\linewidth]{figures/fistaTopDown2e-3.pdf}
        \caption{$F_{E}=0$}
        \label{fig:mfp_zero}
    \end{subfigure}
    \begin{subfigure}{\linewidth}
        \centering
        \includegraphics[width=\linewidth]{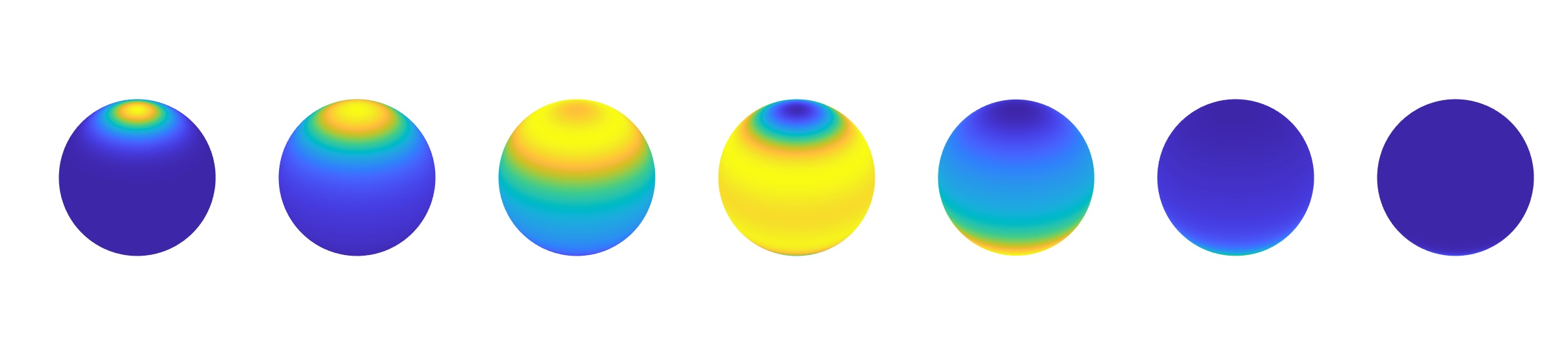}
        \caption{$F_{E}=\frac{\rho^2}{2}$}
        \label{fig:mfp_quadratic}
    \end{subfigure}
    \begin{subfigure}{\linewidth}
        \centering
        \includegraphics[width=\linewidth]{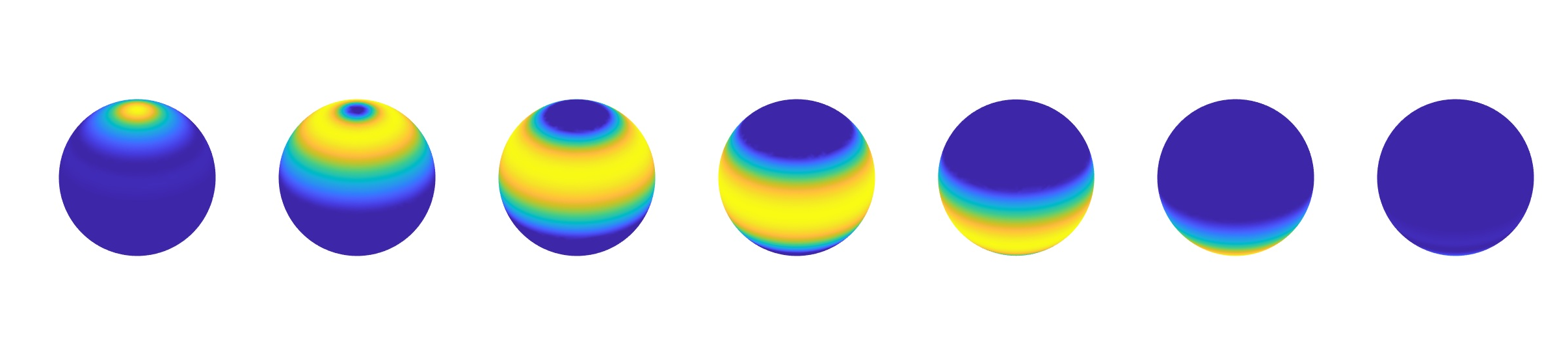}
        \caption{$F_{E}=\rho\log \rho,\ \rho>0$}
        \label{fig:mfp_entropy}
    \end{subfigure}
    \begin{subfigure}{\linewidth}
        \centering
        \includegraphics[width=\linewidth]{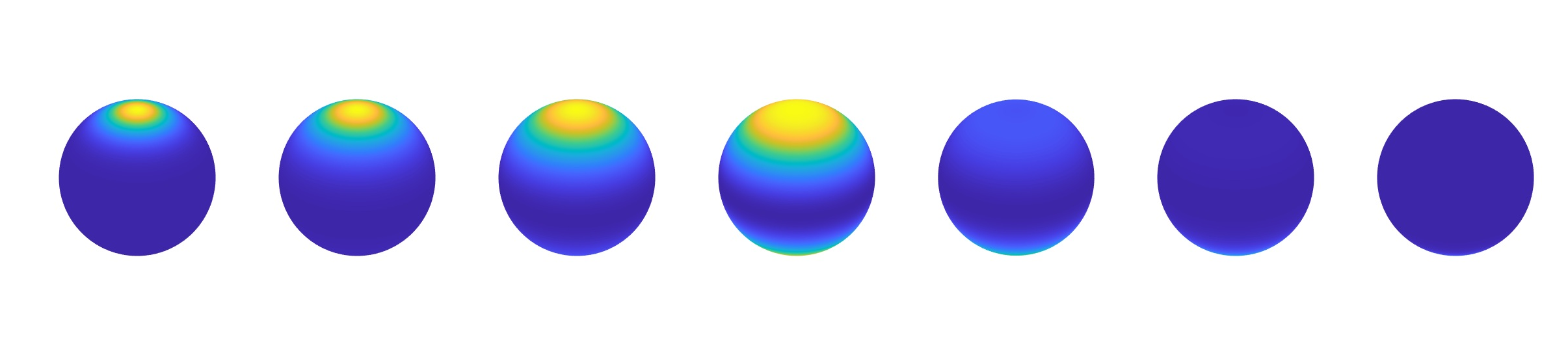}
        \caption{$F_{E}=\frac{1}{\rho},\ \rho>0$}
        \label{fig:mfp_reciprocal}
    \end{subfigure}
    \caption{Solving mean field planning on sphere using gradient enhanced FISTA with different extra cost functions $F_{E}$.}
    \label{fig:gradient_enhanced_FISTA_MFP}
\end{figure}
\subsection{Proximal Splitting}
We next test the proximal splitting scheme on a surface with more complicated geometry. The example uses the Enzensberger--Stern algebraic surface, defined as the zero level set of
\[
\phi(\mathbf{x}) = 400\left(x_1^2x_2^2+x_2^2x_3^2+x_1^2x_3^2\right)
-\left(1-x_1^2-x_2^2-x_3^2\right)^3-40.
\]
High-curvature regions make geometric approximation and local recovery conditioning relevant. The recorded initial and terminal distributions are indicator-type densities supported near two high-curvature corners. Figure~\ref{fig:DR_splitting_stern} shows the redistribution of density across the surface. The exact support regions, normalization and solver tolerances are needed to reproduce this calculation. Density snapshots alone do not certify transport optimality or identify individual geodesic trajectories; the nonsmooth endpoints also lie outside the smooth-data recovery assumptions.
\begin{figure}[htbp]
    \centering
    \includegraphics[width=1\linewidth]{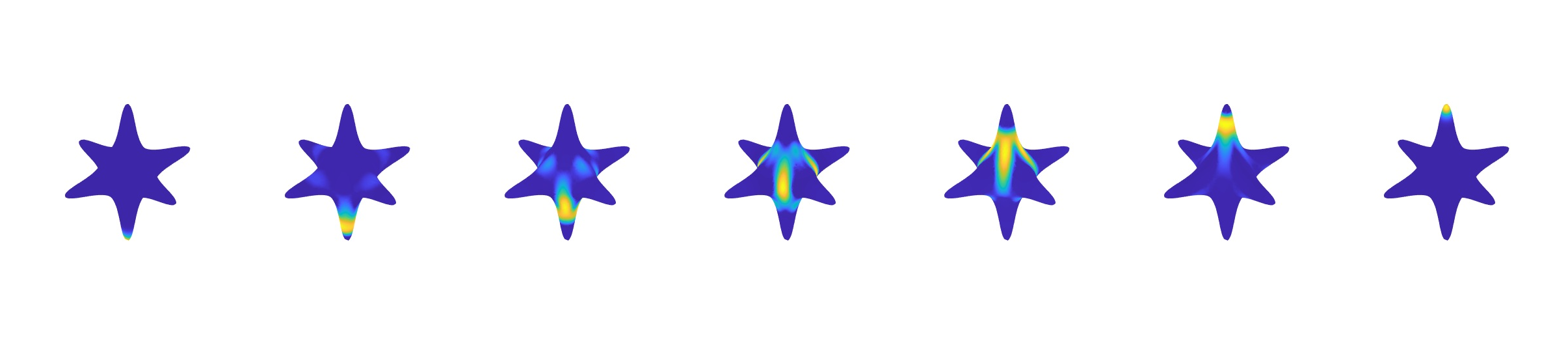}

    \caption{Solving dynamic OT on Enzensberger-Stern star algebraic surface using Douglas-Rachford splitting}
    \label{fig:DR_splitting_stern}
\end{figure}
\subsection{Recorded refinement values}
Table~\ref{tab:recorded_errors} preserves the reported errors and orders for FISTA, ADMM, Douglas--Rachford splitting, and their gradient-enhanced counterparts. The method references identify the associated algorithmic literature~\cite{yu2024fast,lavenant2018dynamical,dong2024admm,papadakis2014proximal}; the numbers are part of the manuscript's experiment record, rather than values reproduced from those publications. The recorded resolution index is denoted by $N$. Its relation to $h$ and $N_t$, the error norm, and the reference solution remain unspecified in that record, so the table is not used as a verification of Theorem~\ref{thm:finite_iterations}.

If consecutive resolution levels halve the relevant mesh parameter, the customary observed order is $\log_2(e_N/e_{2N})$. Recomputing this quantity from the displayed errors gives values close to the recorded orders; small differences in the last digits require the unrounded data. The GE errors decrease faster over several of the displayed refinements, although all three GE variants have larger errors at $N=8$ than their associated baselines. The last ADMM pair has an observed order near two. These records support neither a uniform advantage at every resolution nor a common first-order characterization of all baseline methods.

\begin{table}[htbp]
\centering
\caption{Recorded error values and reported orders. The resolution indices and numerical entries are retained from the existing record; a specified norm, reference solution, and grid relation are required for a reproducible convergence claim.}
\label{tab:recorded_errors}
\small
\setlength{\tabcolsep}{4pt}
\resizebox{\linewidth}{!}{
 \renewcommand{\arraystretch}{1.25}
\begin{tabular}{|l|c|c|c|c|c|c|c|c|}
\hline
\multirow{2}{*}{Method} 
& \multicolumn{2}{c|}{$N=8$} 
& \multicolumn{2}{c|}{$N=16$} 
& \multicolumn{2}{c|}{$N=32$} 
& \multicolumn{2}{c|}{$N=64$} \\
\cline{2-9}
& error & order 
& error & order 
& error & order 
& error & order \\
\hline
FISTA\cite{yu2024fast} &2.185e-1 &-- &9.857e-2 & 1.149  &4.286e-2 & 1.202 &1.931e-2 & 1.151\\
\hline
GE-FISTA & 2.731e-1 & -- & 5.848e-2 & 2.223 & 1.558e-2 & 1.908 & 3.799e-3 & 2.036 \\
\hline
ADMM\cite{lavenant2018dynamical} &3.012e-1 &-- &9.146e-2 &1.720 &3.281e-2 &1.479 &8.075e-3 & 2.023\\
\hline
GE-ADMM\cite{dong2024admm} & 3.406e-1 & -- & 6.225e-2 & 2.452 & 1.395e-2 & 2.157 & 2.903e-3 & 2.265 \\
\hline
DR-splitting\cite{papadakis2014proximal} &2.153e-1 &-- &8.965e-2 & 1.264 &3.871e-2 & 1.210 &1.889e-2 & 1.035\\
\hline
GE-DR-splitting & 2.691e-1&-- &6.228e-2 &2.111 &1.366e-2 &2.189 &2.793e-3 &2.290 \\
\hline
\end{tabular}}
\end{table}

\FloatBarrier
\Needspace{12\baselineskip}
\section{Conclusions}\label{sec:conclusion}
We have formulated a common gradient-enhanced approximation of the continuity-constraint projection for variational mean field planning on surfaces. The construction retains the time finite differences and surface finite elements of the Poisson solver, and uses temporal PPR and spatial PPPR to update density and momentum. Its integration into ISTA, FISTA, and Douglas--Rachford makes the same recovered differential quantities available to explicit-gradient and implicit-proximal iterations.

The analysis distinguishes the recovered update from an exact projection. It gives a matched-data projection error decomposition, algebraic tests for constraint and idempotence defects, a conditional finite-iteration estimate for basic ISTA/FISTA, and a finite-step perturbation bound for Douglas--Rachford. Higher-order recovered derivatives contribute to these estimates under the stated regularity and mesh hypotheses. Input differentiation, endpoint enforcement, geometric approximation, and solver errors must also be controlled before inferring an overall discretization rate or a convergent outer iteration.

The retained numerical illustrations show the intended surface transport and interaction-cost settings, while the recorded refinement values motivate further quantitative verification. A complete assessment requires the corresponding error definitions, reference solutions, conservation residuals, and computational costs. Uniform stability of the recovered projection, treatment of nonsmooth or vacuum data, and analysis of adaptive branch changes remain directions for further work.

\Needspace{10\baselineskip}
\bibliographystyle{plain}
\bibliography{references}

\end{document}